\def\MR#1{\href{http://www.ams.org/mathscinet-getitem?mr=#1}{MR#1}}
\begin{document}
\theoremstyle{plain}
\newtheorem{theorem}{Theorem}[section]
\newtheorem{lemma}{Lemma}[section]
\newtheorem{proposition}{Proposition}[section]
\newtheorem{corollary}{Corollary}[section]

\theoremstyle{definition}
\newtheorem{definition}[theorem]{Definition}

\newtheorem{example}[theorem]{Example}

\newtheorem{remark}{Remark}[section]
\newtheorem{remarks}[remark]{Remarks}
\newtheorem{note}{Note}
\newtheorem{case}{Case}

\numberwithin{equation}{section}
\numberwithin{table}{section}
\numberwithin{figure}{section}



\def\div{\text{\rm div}}
\def\cal{\Cal } 
\def\d{{\mathrm d}}
\def\tr|{|\! |\! |}

\def\i{\mathrm{i} }
\def\e{\mathrm{e} }
\def\C{{\mathbb C}}
\def\P{{\mathbb P}}
\def\R {{\mathbb R}}
\def\N{{\mathbb N}}
\def\Z{{\mathbb Z}}
\def\N{{\mathbb N}}
\def\T{{\mathbb T}}
\def\Re{\text {\rm Re} }

\def\D{{\mathscr{D}}}

\def\A{{\mathcal{A}}}
\def\E{{\mathcal{E}}}
\def\L{{\mathcal{L}}}
\def\M{{\mathcal{M}}}
\def\X{{\mathcal{X}}}
\def\V{{\mathcal{V}}}
\def\U{{\mathcal{U}}}
\def\W{{\mathcal{W}}}

\def\<{{\langle }}
\def\>{{\rangle }}

\def\wU{\widehat U}
\def\wV{\widehat V}

\def\AA{\text{${\mathcal O}\hskip-3pt\iota$}}

\newcommand{\stig}[1]{{\color{red}{#1}}}



\title[Discontinuous Galerkin methods for nonlinear parabolic equations]
{Error analysis for discontinuous Galerkin time-stepping methods for nonlinear
  parabolic equations via maximal regularity} 

\author[Georgios Akrivis]{Georgios Akrivis}
\address{Department of Computer Science and Engineering, University of Ioannina, 
451$\,$10 Ioannina, Greece, and Institute of Applied and Computational Mathematics, 
FORTH, 700$\,$13 Heraklion, Crete, Greece}
\email {\href{mailto:akrivis@cse.uoi.gr}{akrivis{\it @\,}cse.uoi.gr}}  
\author[Stig Larsson]{Stig Larsson}
\address{Department of Mathematical Sciences, Chalmers University of Technology
and University of Gothenburg, SE--412 96~Gothenburg, Sweden}
\email {\href{mailto:stig@chalmers.se}{stig{\it @\,}chalmers.se}}  
\date{\today}

\keywords{Discontinuous Galerkin methods, maximal regularity,
  nonlinear parabolic equations} 
\subjclass[2020]{65M12, 65M15, 65M60}

\thanks{S.~L.~was supported by Vetenskapsr\aa{}det (VR) through grant
  no.~2017-04274.}

\begin{abstract}
  We consider the discretization of a class of nonlinear parabolic
  equations by discontinuous Galerkin time-stepping methods and
  establish a priori as well as conditional a posteriori error
  estimates.  Our approach is motivated by the error analysis in
  \cite{KuLL} for Runge--Kutta methods for nonlinear parabolic
  equations; in analogy to \cite{KuLL}, the proofs are based on
  maximal regularity properties of discontinuous Galerkin methods for
  non-autonomous linear parabolic equations.
\end{abstract}

\maketitle


\section{Introduction}\label{Se:1}

\subsection{The  initial and boundary value problem}\label{SSe:1.1}
We consider an initial and boundary value problem for a nonlinear
parabolic equation, subject to homogeneous Dirichlet boundary
conditions,
\begin{equation}
\label{ivp}
\left \{
\begin{alignedat}{3} 
&u_t=\nabla\cdot f(\nabla u, u) \quad && \text{in } &&\varOmega \times (0,T],\\
&u=0 \quad && \text{on } &&\partial\varOmega \times (0,T],\\
&u(\cdot,0)=u_0\quad && \text{in } &&\varOmega,\\
\end{alignedat}
\right .
\end{equation}
with a given initial value $u_0$ and $\varOmega\subset \R^d$  a bounded domain
with smooth boundary $\partial\varOmega$. We assume that the flux function
$f \colon \R^d\times \R\to \R^d$ is smooth and satisfies a \emph{local} ellipticity condition,
namely, for every $(p,v)\in \R^d\times \R,$ the symmetric part of the
Jacobian matrix $D_pf(p, v)$, that is, the matrix 
\begin{equation}
\label{local_ellipticity}
\frac 12 \big (D_pf(p, v)+D_pf(p, v)^\top\big )\in \R^{d,d}\ \text{is positive definite},
\end{equation}
and that \eqref{ivp}  possesses a unique, sufficiently smooth solution $u.$
Notice that eigenvalues of the symmetric matrix in \eqref{local_ellipticity} are
allowed to tend to $0$ or $\infty$ as $|(p,v)|\to \infty.$

We refer to \cite{KuLL} for several parabolic equations of the form \eqref{ivp}
occurring in applications as well as for previous work on error estimates
for time discretizations of \eqref{ivp}.

\subsection{Discontinuous Galerkin time discretization}\label{SSe:1.2}
We consider the discretization of the initial and boundary value problem \eqref{ivp} 
by discontinuous Galerkin (dG) methods. 

Let $N\in \N, k=T/N$ be the constant time step,
$t_n:=nk,\ n=0,\dotsc,N,$ be a uniform partition of the time interval
$[0,T],$ and $J_n:= (t_n,t_{n+1}].$

For $s\in \N_0,$ we denote by $\P(s)$ the space of polynomials of
degree at most $s$ with coefficients in $H^1_0(\varOmega)$, i.e., the elements
$g$ of $\P(s)$ are of the form
\[g(t)= \sum_{j=0}^s  t^j w_j, \quad w_j\in  H^1_0(\varOmega), \quad j=0,\dotsc, s.\]
With this notation, let $\V_k^{\text{c}} (s)$ and
$\V_k^{\text{d}} (s)$ be the spaces of continuous and possibly
discontinuous piecewise elements of $\P(s)$, respectively,
\[\begin{aligned}
&\V_k^{\text{c}} (s):=\{v\in C\big ([0,T];H^1_0(\varOmega)\big ): v|_{J_n}\in \P(s), \ n=0,\dotsc, N-1\},\\
&\V_k^{\text{d}} (s):=\{v: [0,T]\to H^1_0(\varOmega), \ v|_{J_n}\in \P(s), \ n=0,\dotsc, N-1\}.
\end{aligned}\]
The spaces $\L_k^{\text{c}} (s)$ and $\L_k^{\text{d}} (s)$ are defined
analogously, with coefficients $w_j\in L^2(\varOmega)$.

We denote by $(\cdot,\cdot)$ the inner product both on
$L^2(\varOmega)$ and on $\big (L^2(\varOmega)\big )^d.$

For $q\in \N,$ with starting value $U(0)=U_0=u_0,$ we consider the
discretization of the initial and boundary value problem \eqref{ivp}
by the \emph{discontinuous Galerkin method} dG$(q-1)$, i.e., we seek
$U\in \V_k^{\text{d}} (q-1)$ such that
\begin{equation}
\label{dg}
\int_{J_n}   \big[ ( U_t ,v )  + (f(\nabla U, U) ,\nabla v ) \big] \, \d t  
+ ( U_n^{+}-U_n, v_n^{+})= 0 \quad \forall v \in \P(q-1)
\end{equation}
for $n=0,\dotsc,N-1$.  As usual, we use the notation $v_n:=v(t_n),$
$v_n^{+}:=\lim_{s\searrow 0} v(t_n+s)$. 

As we shall see later on, there exists a locally unique solution
$U(t)\in W^{1,\infty}(\varOmega), 0\leqslant t\leqslant T,$ of
\eqref{dg}; then, $f(\nabla U(t), U(t))\in (L^2(\varOmega))^d$ and the
formulation in \eqref{dg} makes sense.

With $0<c_1<\dotsb<c_q=1$ the Radau nodes in the interval $[0,1]$, let
$t_{ni}:=t_n+c_ik, i=1,\dotsc,q,$ be the intermediate nodes in
$\bar{J}_n=[t_n,t_{n+1}]$; we also define $t_{n0}:=t_n.$
Following \cite{MN}, we define the \emph{reconstruction mapping}
$\V_k^{\text{d}} (q-1) \to \V_k^{\text{c}} (q),$
$w\mapsto \widehat w,$ via extended interpolation at the Radau nodes,
\begin{equation}
\label{wU_def-inter}
\widehat w ( t_{nj})= w( t_{nj}), \quad j=0, \dotsc ,q \quad
\text{(where $w(t_{n0})=w_n$)}.
\end{equation}
Using the Lagrange form of the difference
$\widehat w(t)-w(t)=(w_n-w_n^+)\hat\ell_{n0}(t),$ with
$\hat \ell_{n0}$ the polynomial of degree $q$ vanishing at the nodes
$t_{n1},\dotsc,t_{nq}$ and with the value $1$ at $t_{n0},$ integration
by parts and the fact that the Radau quadrature rule with $q$ nodes
integrates polynomials of degree at most $2q-2$ exactly, it is easily
seen that
\begin{equation}
\label{wU_def}
\int_{J_n}    (\widehat w_t ,v )\,\d t = \int_{J_n}    ( w_t ,v )   \, \d t
+ ( w_n^+-w_n, v_n^+)  \quad \forall v \in \P(q-1) .
\end{equation}
%
Therefore, with the reconstruction $\wU$ of $U,$ we can reformulate
the dG method \eqref{dg} as
\begin{equation}
\label{dgwU}
\int_{J_n}   \big[ ( \wU_t ,v )  + (f(\nabla U, U) ,\nabla v ) \big] \, \d t  
= 0 \quad \forall v \in \P(q-1) .
\end{equation}
Denoting by $P_{q-1}$ the piecewise $L^2$-projection onto
$\L_k^{\text{d}}(q-1)$, the relation \eqref{dgwU} implies the
\emph{pointwise equation}
\begin{equation}
\label{dgwU_pw}
\wU_t -  P_{q-1} \nabla\cdot  f(\nabla U, U) =0,
\end{equation}
which will be used occasionally. 

\subsection{Main results}\label{SSe:1.3}
We establish the following optimal order, optimal regularity a priori and
conditional a posteriori error estimates.

\begin{theorem}[A priori error estimates]\label{Theorem1}
  Let $p$ and $r$ be sufficiently large such that $2/p + d/r < 1,$ and
  assume that the solution of \eqref{ivp} is sufficiently regular,
  namely,
  $u\in W^{q,p}\big ((0,T);W^{2,r}(\varOmega)\cap
  W^{1,r}_0(\varOmega)\big ).$ Then there is $k_0$ such that, for
  $k\leqslant k_0$, there is a neighborhood of $u$ in
  $L^\infty((0,T);W^{1,\infty}(\varOmega))$ such that the dG
  equation~\eqref{dg} has a unique solution
  $U\in \V_k^{\text{d}} (q-1)$ in this neighborhood, and $U$ satisfies
  the convergence estimate
  \begin{equation}
    \label{apriori-estimate1}
    \|u-U\|_{L^p((0,T);W^{2,r}(\varOmega))}  \leqslant C k^q.
  \end{equation}
  Furthermore, if 
   $u\in W^{q+1,p}\big ((0,T);L^r(\varOmega)\big ),$ for the
  reconstruction $\wU\in \V_k^{\text{c}} (q)$ of $U$, we have
  \begin{equation}
    \label{apriori-estimate2}
    \|u_t - \widehat U_t\|_{L^p((0,T);L^r(\varOmega))}
    + \|u-\wU\|_{L^p((0,T);W^{2,r}(\varOmega))} \leqslant C k^q
  \end{equation}
  and
  \begin{equation}
    \label{apriori-estimate3}
    \|u-\widehat U\|_{L^\infty((0,T);W^{1,\infty}(\varOmega))}
    \leqslant C k^q.  
  \end{equation}
  The constant $C$ depends on $q, f, T$, and
  $u$, but it is independent of the time step $k$.
\end{theorem}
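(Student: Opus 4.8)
The plan is to follow the strategy of \cite{KuLL} for Runge--Kutta methods, replacing the ODE solver by the dG method and its reconstruction, and using the maximal regularity of dG time-stepping for non-autonomous linear parabolic problems (which, as announced in the excerpt, is established earlier in the paper). The starting point is to \emph{freeze} the nonlinearity along the exact solution. Write $A(t)w := -\nabla\cdot\!\big(D_pf(\nabla u(t),u(t))\nabla w + D_vf(\nabla u(t),u(t))\,w\big)$, the linearization of $-\nabla\cdot f$ at $u(t)$. By the local ellipticity assumption \eqref{local_ellipticity}, $A(t)$ is, for each $t$, a uniformly elliptic second-order operator with smooth coefficients (smoothness and boundedness of the coefficients come from the assumed regularity of $u$), so the family $\{A(t)\}$ generates an analytic semigroup and, crucially, the dG$(q-1)$ discretization of $u_t + A(t)u = g$ enjoys discrete maximal $L^p(L^r)$-regularity uniformly in $k$. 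This is the linear engine that drives the whole argument.

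Next I would set up a fixed-point / Newton-type scheme. Let $\wU^{\star}$ be the dG reconstruction-interpolant of the exact solution $u$ (equivalently, work with the interpolant $U^\star \in \V_k^{\mathrm d}(q-1)$ of $u$ at the Radau nodes, whose reconstruction is $\wU^\star$); standard approximation theory for Radau interpolation gives $\|u-U^\star\|_{L^p(W^{2,r})}\le Ck^q$, $\|u-\wU^\star\|_{L^p(W^{2,r})}\le Ck^q$, $\|u_t-\wU^\star_t\|_{L^p(L^r)}\le Ck^q$, and, via the Sobolev embedding $W^{2,r}\hookrightarrow W^{1,\infty}$ (valid since $d/r<1$) together with the parabolic smoothing inherent in maximal regularity, also $\|u-\wU^\star\|_{L^\infty(W^{1,\infty})}\le Ck^q$. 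Plugging $U^\star$ into \eqref{dgwU} produces a consistency residual; subtracting, the error $E:=U-U^\star$ (with reconstruction $\widehat E = \wU-\wU^\star$) satisfies a perturbed linear dG equation
\begin{equation*}
  \int_{J_n}\!\big[(\widehat E_t,v)+(A(t)E,\nabla\!\cdot\!\text{-form})\big]\,\d t
  = \int_{J_n}\!\big[(\text{consistency}) + (\text{quadratic remainder in }E)\big]\,\d t,
  \quad\forall v\in\P(q-1),
\end{equation*}
where the consistency term has $L^p(L^r)$-norm $O(k^q)$ (by the interpolation estimates and Taylor expansion of $f$ about $(\nabla u,u)$), and the remainder is quadratic, of the form $R(E)$ with $\|R(E)\|_{L^p(L^r)}\le C\,\|E\|_{L^\infty(W^{1,\infty})}\,\|E\|_{L^p(W^{2,r})}$, using the smoothness of $f$ and that we restrict to a neighborhood of $u$ in $L^\infty(W^{1,\infty})$.

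Now apply discrete maximal regularity to this linear dG equation: it yields
\begin{equation*}
  \|\widehat E_t\|_{L^p(L^r)} + \|E\|_{L^p(W^{2,r})} + \|\widehat E\|_{L^p(W^{2,r})}
  \le C_{\mathrm{MR}}\big(k^q + \|R(E)\|_{L^p(L^r)}\big)
  \le C_{\mathrm{MR}}\big(k^q + C\,\|E\|_{L^\infty(W^{1,\infty})}\|E\|_{L^p(W^{2,r})}\big),
\end{equation*}
uniformly in $k$. To close this one needs to control $\|E\|_{L^\infty(W^{1,\infty})}$, which on each $J_n$ is estimated by $\|\widehat E\|_{L^\infty(W^{1,\infty})}$ plus a jump term; an inverse inequality in time on the polynomial space $\P(q-1)$, combined with the embedding $W^{2,r}\hookrightarrow W^{1,\infty}$ and the maximal-regularity bound, gives $\|E\|_{L^\infty(W^{1,\infty})}\le C\big(k^q + \|E\|_{L^p(W^{2,r})}\big)$ (this is exactly the point where $2/p+d/r<1$ is needed, to absorb the time-direction inverse estimate without losing powers of $k$). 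Feeding this back, for $k\le k_0$ small enough the quadratic term is absorbed into the left side, yielding $\|E\|_{L^p(W^{2,r})}+\|\widehat E_t\|_{L^p(L^r)}+\|\widehat E\|_{L^\infty(W^{1,\infty})}\le C k^q$. This is the heart of the matter; I expect the \textbf{main obstacle} to be precisely this interplay — getting a clean $L^\infty(W^{1,\infty})$ bound on the discrete error with a constant independent of $k$, so that the defect-correction fixed point is a contraction on a ball of radius $O(k^q)$ in the right norm. Uniqueness of $U$ in the stated neighborhood then follows from the contraction property (two solutions would have difference satisfying the homogeneous perturbed equation, forced to be zero for $k$ small), existence follows by Banach's fixed point theorem (or the Newton--Kantorovich theorem) applied to the map $E\mapsto$ (dG solution with data $k^q$-term $+R(E)$), and the triangle inequality $u-U = (u-U^\star) - E$ together with the corresponding statements for $\wU$ delivers \eqref{apriori-estimate1}, \eqref{apriori-estimate2}, \eqref{apriori-estimate3}. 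The extra hypothesis $u\in W^{q+1,p}(L^r)$ is used only to upgrade the interpolation estimate for $\|u_t-\wU^\star_t\|_{L^p(L^r)}$ and hence to obtain \eqref{apriori-estimate2}.
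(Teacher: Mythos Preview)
Your overall strategy---linearize about the exact solution, invoke discrete maximal regularity for the resulting non-autonomous linear dG problem, and close via a fixed-point argument---matches the paper's plan and is the natural adaptation of \cite{KuLL}. But the mechanism you propose for the crucial $L^\infty(W^{1,\infty})$ bound does not work. An inverse inequality in time on $\P(q-1)$ gives only $\|E\|_{L^\infty(J_n;X)}\leqslant Ck^{-1/p}\|E\|_{L^p(J_n;X)}$; combined with the spatial embedding $W^{2,r}\hookrightarrow W^{1,\infty}$ this yields $\|E\|_{L^\infty(W^{1,\infty})}\leqslant Ck^{-1/p}\|E\|_{L^p(W^{2,r})}$, which loses $k^{-1/p}$, and the condition $2/p+d/r<1$ does nothing to recover that loss. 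What the paper actually uses is the \emph{space-time Sobolev inequality} \eqref{Sobolev-ineq} (from Lemma~\ref{Le:Sob-embedd}), applied to the reconstruction: since $\widehat\vartheta(0)=0$ one gets $\|\widehat\vartheta\|_{L^\infty(W^{1,\infty})}\leqslant C\big(\|\widehat\vartheta_t\|_{L^p(L^r)}+\|\widehat\vartheta\|_{L^p(W^{2,r})}\big)$, and then transfers this to $\vartheta$ because $\vartheta$ is a discontinuous Lagrange interpolant of $\widehat\vartheta$. The term $\|\widehat\vartheta_t\|_{L^p(L^r)}$ on the right is essential and is supplied by the maximal-regularity estimate; the bound you state, $\|E\|_{L^\infty(W^{1,\infty})}\leqslant C(k^q+\|E\|_{L^p(W^{2,r})})$, omits it and cannot be obtained the way you describe.

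The fixed-point architecture in the paper is also different from what you outline, and the difference is substantive. The paper linearizes as in \eqref{eq:linearization}--\eqref{eq:G1}, where the remainder $G=(A(\nabla u)-A(\nabla U))U$ is \emph{not} quadratic in the error: it contains a piece $(A(\nabla u)-A(\nabla U))\tilde u$ that is linear in $\nabla e$ against the $O(1)$ quantity $\tilde u$. This forces two additional devices that your sketch lacks. First, the nonlinearity is truncated via a cutoff $\vartheta\mapsto\beta(\vartheta)\nabla\vartheta$ and existence for the truncated problem is obtained by \emph{Schaefer's} fixed-point theorem (compactness coming from the same space-time embedding), not Banach or Newton--Kantorovich. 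Second, after existence, the remainder estimate still leaves a term $CLM\|\vartheta\|_{L^p(W^{1,\infty})}$ with no smallness factor; the paper handles it by the interpolation inequality $\|v\|_{W^{1,\infty}}\leqslant\gamma\|v\|_{W^{2,r}}+C_\gamma\|v\|_{L^r}$ followed by a discrete Gronwall argument based on $\|\partial_k\widehat\vartheta\|_{\ell^p(L^r)}$, to arrive at the clean $O(k^q)$ bound in \eqref{eq:error-equation5}. Only then is the truncation parameter chosen ($\varepsilon=k^{q/2}$) to verify that the truncation never activates, and local uniqueness is proved separately. Incidentally, the interpolant used is not the Radau interpolant you propose but the one in \eqref{eq:tilde-u1} (endpoint value plus orthogonality to $\P_{q-2}$), chosen precisely so that the contribution \eqref{eq:rho1} drops out of the error equation.
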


Notice that our assumptions $u\in W^{q,p}\big ((0,T);W^{2,r}(\varOmega)\cap
W^{1,r}_0(\varOmega)\big )$  and $q\geqslant 1, p>2$ imply
$u\in C\big( [0,T];W^{2,r}(\varOmega)\cap W^{1,r}_0(\varOmega)\big);$
in particular, $u_0\in W^{2,r}(\varOmega)\cap W^{1,r}_0(\varOmega)$. 

\begin{theorem}[Conditional a posteriori error estimate]\label{Theorem2} 
  Let $p$ and $r$ be sufficiently large such that $2/p + d/r < 1$ and
  let $u$ and $U$ be as in Theorem~\ref{Theorem1}. Assume
  that the reconstruction $\wU\in \V_k^{\text{c}} (q)$ of the dG
  approximation $U\in \V_k^{\text{d}} (q-1)$ is such that
\begin{equation}
\label{conditional}
\|u-\wU\|_{L^\infty((0,T);W^{1,\infty}(\varOmega))}
\leqslant \nu
\end{equation}
with a sufficiently small constant $\nu,$ independent of $k$ and $N$
such that $Nk\leqslant T.$ 
Let $R(t):=\wU_t(t)-\nabla\cdot f(\nabla \wU(t),\wU)$ be the residual of
the reconstruction $\wU\in \V_k^{\text{c}} (q).$ Then, the following
maximal regularity a posteriori error estimate holds
\begin{equation}
\label{aposteriori-estimate1}
\|u_t - \widehat U_t\|_{L^p((0,t);L^r(\varOmega))}
+\|u - \widehat U\|_{L^p((0,t);W^{2,r}(\varOmega))}
\leqslant C\|R\|_{L^p((0,t);L^r(\varOmega))}
\end{equation}
for all $0<t\leqslant T,$
with a constant $C$ depending also on $T.$ Furthermore, the estimator
is of optimal order, i.e.,
$\|R\|_{L^p((0,T);L^r(\varOmega))}\leqslant Ck^q.$
\end{theorem}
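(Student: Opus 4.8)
The strategy is to view the error $e := u - \wU$ as the solution of a linear non-autonomous parabolic equation with the residual $R$ as right-hand side, and then invoke the maximal regularity of the dG reconstruction (established earlier for linear non-autonomous problems) together with a linearization of the flux $f$ along a path between $\nabla u$ and $\nabla \wU$. First I would write down the error equation: subtracting $u_t = \nabla\cdot f(\nabla u, u)$ from the definition of the residual $R = \wU_t - \nabla\cdot f(\nabla\wU,\wU)$ gives
\begin{equation}
\label{eq:err-eq}
e_t - \nabla\cdot\bigl(f(\nabla u,u) - f(\nabla\wU,\wU)\bigr) = -R .
\end{equation}
Using the fundamental theorem of calculus along the segment joining $(\nabla\wU,\wU)$ to $(\nabla u,u)$, I would write
\[
f(\nabla u,u) - f(\nabla\wU,\wU) = A(t,x)\nabla e + \bm{b}(t,x)\,e,
\]
with $A(t,x) = \int_0^1 D_p f\bigl(\nabla\wU + \theta\nabla e,\ \wU + \theta e\bigr)\,\dd\theta$ and a similar lower-order coefficient $\bm b$ from $D_v f$. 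Thus $e$ solves the linear equation $e_t - \nabla\cdot(A\nabla e + \bm b e) = -R$, $e|_{\partial\varOmega}=0$, $e(0)=0$ (the initial condition is exact since $\wU(0)=U_0=u_0$). The point of hypothesis~\eqref{conditional} is precisely that, since $\wU$ stays within distance $\nu$ of $u$ in $L^\infty((0,T);W^{1,\infty}(\varOmega))$, the argument $(\nabla\wU+\theta\nabla e,\wU+\theta e)$ ranges over a bounded set on which the smooth $f$ has $D_pf$ with uniformly positive-definite symmetric part (from \eqref{local_ellipticity}) and bounded, uniformly continuous coefficients; hence $A$ and $\bm b$ are admissible coefficients for the maximal-regularity theory, with constants depending on $u$, $f$, $T$ (and on $\nu$, but not on $k$ or $N$).

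Next I would apply the discrete maximal $L^p$-regularity estimate for the dG reconstruction applied to the linear non-autonomous operator with coefficients $(A,\bm b)$ — this is the main technical input proved earlier in the paper for the linear case — to obtain
\[
\|e_t\|_{L^p((0,t);L^r)} + \|e\|_{L^p((0,t);W^{2,r})}
\leqslant C\,\|R\|_{L^p((0,t);L^r)}
\]
for all $0 < t \leqslant T$, where I should be slightly careful that the lower-order term $\nabla\cdot(\bm b e) = \bm b\cdot\nabla e + (\nabla\cdot\bm b)e$ is absorbed: on the left one controls $\|\nabla e\|_{L^p((0,t);L^r)}$ by interpolation between $\|e\|_{L^p W^{2,r}}$ and $\|e\|_{L^p L^r}$ (or by the maximal-regularity estimate itself), and a short Gronwall-in-$t$ argument over $[0,t]$ then yields the stated constant depending additionally on $T$. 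This is exactly the structure of the corresponding step in \cite{KuLL}, so I expect it to go through with the dG maximal-regularity estimate substituted for the Runge–Kutta one.

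Finally, for the optimality claim $\|R\|_{L^p((0,T);L^r)} \leqslant C k^q$, I would combine the a priori bounds already obtained in Theorem~\ref{Theorem1}. Indeed $R = \wU_t - \nabla\cdot f(\nabla\wU,\wU) = (\wU_t - u_t) + \nabla\cdot\bigl(f(\nabla u,u) - f(\nabla\wU,\wU)\bigr)$, so by the same linearization $\|R\|_{L^p L^r} \leqslant \|u_t - \wU_t\|_{L^p L^r} + C\bigl(\|u-\wU\|_{L^p W^{2,r}} + \|\nabla(u-\wU)\|_{L^p L^r}\bigr)$, and each term on the right is $\leqslant C k^q$ by \eqref{apriori-estimate2}. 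The main obstacle, as usual in this kind of result, is verifying rigorously that the linearized coefficients $(A,\bm b)$ genuinely satisfy the hypotheses of the linear dG maximal-regularity theorem with constants independent of $k$: one must check the needed regularity/continuity in $t$ of $A(t,\cdot)$ (which requires enough smoothness of $\wU$ in time, hence the use of the reconstruction rather than $U$ itself) and confirm that the smallness of $\nu$ is what keeps the ellipticity constants uniform. Everything else is bookkeeping: the embedding $2/p + d/r < 1$ guarantees $W^{2,r}\hookrightarrow W^{1,\infty}$ so that the pointwise manipulations of $f$ and its derivatives are legitimate.
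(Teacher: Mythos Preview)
Your overall architecture is right---derive a pointwise error equation for $\hat e=u-\wU$, invoke maximal regularity with the residual $R$ on the right, absorb the nonlinear remainder using the smallness hypothesis~\eqref{conditional}, and finish with Gronwall---but you invoke the wrong maximal regularity result. The error $\hat e$ is \emph{not} a dG approximation (nor the reconstruction of one) of any linear problem: it is the \emph{exact} solution of the continuous parabolic equation $\hat e_t = A(\cdot)\hat e + \dotsb - R$, with $\hat e(0)=0$. Hence the relevant tool is the \emph{continuous} maximal regularity for non-autonomous linear parabolic equations (Lemma~\ref{Le:maxreg-nonaut}), not the discrete dG estimate of Lemma~\ref{Le:maxreg-dG}. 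The latter bounds dG iterates $V,\wV$ in terms of data for the scheme~\eqref{dg-na-sse}; there is no such scheme here. This is precisely the point of working with the reconstruction: $\wU$ is continuous in time and satisfies a pointwise equation, so the a~posteriori analysis lives entirely on the continuous side.

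A secondary issue concerns your linearization. Writing $f(\nabla u,u)-f(\nabla\wU,\wU)$ via the fundamental theorem along the segment produces coefficients $A(t,x)=\int_0^1 D_pf(\nabla\wU+\theta\nabla\hat e,\dotsc)\,\d\theta$ that depend on $\wU$. To apply Lemma~\ref{Le:maxreg-nonaut} you then need the time-Lipschitz condition~\eqref{eq:Lipsch} for these coefficients, which would require a $k$-uniform bound on $\nabla\wU_t$ in $L^\infty(\varOmega)$---not something Theorem~\ref{Theorem1} supplies. The paper avoids this by linearizing about the exact solution: one writes $\hat e_t = A(\nabla u(t))\hat e + g - R$ with $g=[A(\nabla u)-A(\nabla\wU)]\wU$ a perturbation. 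Now the principal operator $A(\nabla u(t))$ inherits all its regularity (ellipticity, spatial H\"older, time-Lipschitz) from the smooth $u$ alone, with constants independent of $k$; the term $g$ is then estimated via~\eqref{Lipschitz-A} and the smallness~\eqref{conditional}, and a continuous Gronwall argument closes the estimate. Your treatment of the optimal order of $\|R\|_{L^p((0,T);L^r)}$ is fine and matches the paper.
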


We provide the proofs of these theorems  in Section~\ref{Se:3}
 and Section~\ref{Se:aposteriori}, respectively. 
 In Section~\ref{Se:2} we establish maximal regularity properties of dG
methods for non-autonomous linear parabolic equations
and in Section~\ref{Se:background} we recall background material.

\section{Maximal regularity results for non-autonomous linear
  equations}\label{Se:2}

In this section we establish maximal regularity properties for dG
methods for non-autonomous linear parabolic equations; these
properties play a crucial role in our error analysis for dG methods
for \eqref{ivp}.
 
Logarithmically quasi-maximal parabolic regularity results for dG
methods were established in \cite{LV1} in general Banach spaces for
autonomous equations and in \cite{LV2} in Hilbert spaces for
non-autonomous equations. These works cover also the cases of variable
time steps as well as the critical exponents $p=1,\infty.$ Maximal
regularity for Runge--Kutta and, under natural additional assumptions,
for BDF methods in UMD Banach spaces were derived in \cite{KLL}.  More
recently, another approach to the maximal regularity of dG time
discretization was presented in \cite{KK2024}. This is based on
studying the dG approximation of a temporally regularized Green's
function. The result allows quasi-uniform meshes but is restricted to
$q\geqslant 2$.

We shall use the interpretation of dG methods for linear equations as
modified Radau~IIA methods from \cite{AM-SINUM}; this will allow us to
take advantage of a maximal parabolic regularity property of Radau~IIA
methods for non-autonomous parabolic equations from \cite{KuLL}; see
Lemma~\ref{Le:maxreg-Radau} in the sequel.  In \cite{KuLL}, the
discretization of \eqref{ivp} by certain Runge--Kutta methods,
including the Radau~IIA methods, is analyzed.

\subsection{Radau~IIA methods}\label{SSe:Rad}
We first recall the Radau~IIA methods.
The $q$-stage Radau~IIA method is specified by the Radau nodes $0<c_1<\dotsb<c_q=1$ in the interval $[0,1]$, and the coefficients
\begin{equation}
\label{RK-coef}
a_{ij} =\int _0^{c_i} \ell_j (\tau )\, \d \tau, 
\quad b_{i} =\int _0^{1} \ell_i(\tau )\, \d \tau ~(= a_{qi}),
\quad   i,j=1,\dotsc,q;
\end{equation}
here, $\ell_1,\dotsc, \ell_q\in \P_{q-1}$ are the Lagrange polynomials 
for  $c_1,\dotsc,c_q,$ $\ell_i(c_j)$ $=\delta_{ij}.$
The coefficient matrix $(a_{ij})_{i,j=1,\dotsc,q}$ is invertible
and the stability function of the method vanishes at infinity.

Relations \eqref{RK-coef} reflect the fact that the Radau~IIA method is of collocation type,
i.e.,  its \emph{stage order} is $q.$  The first member of this family, for $q=1,$ is the implicit Euler method.

\emph{Auxiliary results.} We recall a discrete maximal $L^p$-regularity property for Radau~IIA methods for
non-autonomous linear parabolic equations from \cite[Lemma 3.6]{KuLL}, which will play a crucial role in our 
a priori error analysis.

Let us start with a continuous  maximal $L^p$-regularity property for non-autonomous linear parabolic equations.

\begin{lemma}[{\cite[Lemma 3.5]{KuLL}}; maximal regularity for
  non-autonomous equations]\label{Le:maxreg-nonaut}
For $0<T<\infty$ and $1<r<\infty,$ consider operators $A(t)\colon W^{2,r}(\varOmega)\cap W^{1,r}_0(\varOmega)\to L^r(\varOmega), t\in [0,T],$
of the form
\[A(t) v:=\sum_{i,j=1}^d \alpha_{ij}(\cdot,t)v_{x_ix_j},\quad  t\in [0,T];\]
without loss of generality, we assume that the coefficient functions $\alpha_{ij}(\cdot,t)\colon \varOmega\to \R, i,j=1,\dotsc,d,$
are symmetric,  $\alpha_{ij}=\alpha_{ji}.$ We also assume that the coefficients $\alpha_{ij}$ satisfy a 
uniform coercivity condition,
\begin{equation}
\label{eq:coercivity}
\sum_{i,j=1}^d \alpha_{ij}(x,t)\xi_i\xi_j\geqslant \kappa \sum_{i=1}^d |\xi_i|^2\quad \forall x\in \varOmega\  \forall t\in [0,T]\ \forall \xi\in \R^d,
\end{equation}
 with a positive constant $\kappa,$ and a Lipschitz condition in time,
\begin{equation}
\label{eq:Lipsch}
\| \alpha_{ij}(\cdot,t)-\alpha_{ij}(\cdot,s)\|_{L^\infty (\varOmega)}\leqslant L|t-s|\quad  \forall t,s\in [0,T],
\end{equation}
and are uniformly bounded in a H\"older norm, with exponent $\mu>0,$
\begin{equation}
\label{eq:Holder}
\| \alpha_{ij}(\cdot,t)\|_{C^\mu (\overline \varOmega)}\leqslant K\quad  \forall t\in [0,T].
\end{equation}
Then, the operator $A(t)$ has the maximal $L^p$-regularity property for $1<p<\infty$\emph{:}
for every $f\in  L^p\big ((0,T);L^r (\varOmega)\big ),$ with arbitrary $T>0,$
the solution $v$ of the initial and boundary value problem 
\begin{equation}
\label{ivp-maxreg}
\left \{
\begin{alignedat}{3} 
&v_t=A(t)v+f \quad && \text{in } &&\varOmega \times (0,T],\\
&v=0 \quad && \text{on } &&\partial\varOmega \times (0,T],\\
&v(0)=0\quad && \text{in } &&\varOmega,\\
\end{alignedat}
\right .
\end{equation}
is bounded in the form
\begin{equation}
\label{maxreg-nonaut}
\|v_t\|_{L^p ((0,T);L^r (\varOmega) )}+\|v\|_{L^p ((0,T);W^{2,r} (\varOmega) )}
\leqslant c_{p,r}\|f\|_{L^p ((0,T);L^r (\varOmega) )}
\end{equation}
with a constant $c_{p,r}$ depending only on the H\"older exponent $\mu$ and the corresponding bound $K$
in \eqref{eq:Holder}, on  the coercivity constant  $\kappa$  in \eqref{eq:coercivity} and the Lipschitz constant 
$L$  in \eqref{eq:Lipsch},  on  $\varOmega, p,$ and $r,$ and on the time $T.$ 
\end{lemma}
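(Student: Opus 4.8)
The plan is to combine the classical maximal $L^p$-regularity of autonomous second-order elliptic operators on $L^r(\varOmega)$ with a freezing-of-coefficients perturbation argument in order to pass to the non-autonomous setting. For each fixed $s\in[0,T]$ regard $A(s)$ as an unbounded operator on $X:=L^r(\varOmega)$ with domain $D:=W^{2,r}(\varOmega)\cap W^{1,r}_0(\varOmega)$. By the Agmon--Douglis--Nirenberg elliptic estimates on the smooth bounded domain $\varOmega$, together with the uniform coercivity \eqref{eq:coercivity} and the uniform $C^\mu$-bound \eqref{eq:Holder}, the graph norm of $A(s)$ is equivalent to the $W^{2,r}$-norm with constants independent of $s$; moreover $-A(s)$ generates a bounded analytic semigroup on $X$ and, since $L^r$ is a UMD space for $1<r<\infty$ and the semigroup generated by $-A(s)$ enjoys Gaussian (Aronson-type) heat-kernel bounds, $-A(s)$ is $\mathcal R$-sectorial of angle $<\pi/2$. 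By the Weis operator-valued multiplier theorem this yields maximal $L^p$-regularity for the Cauchy problem governed by $A(s)$, and inspection of the constants entering shows that the maximal regularity constant depends only on $\kappa$, on $\mu$ and $K$, on $\varOmega$, and on $p,r$, hence is uniform in $s$.

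Next, partition $[0,T]$ into subintervals $I_j=[\tau_j,\tau_{j+1}]$ of length at most $\eta$, with $\eta$ to be chosen. On $I_j$ freeze the coefficients at $\tau_j$ and write $A(t)=A(\tau_j)+B_j(t)$ with $B_j(t):=A(t)-A(\tau_j)$. The Lipschitz condition \eqref{eq:Lipsch} gives $\|B_j(t)w\|_X\le C L\eta\,\|w\|_{W^{2,r}}\le C L\eta\big(\|A(\tau_j)w\|_X+\|w\|_X\big)$ for all $w\in D$, uniformly in $t\in I_j$ and in $j$. Choosing $\eta$ small --- depending only on $L$ and on the uniform maximal regularity constant from the first step --- makes this perturbation absorbable, and the standard perturbation theorem for maximal regularity yields maximal $L^p$-regularity on each $I_j$ with a common constant. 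Finally one glues the pieces: the trace embedding $W^{1,p}(I_j;X)\cap L^p(I_j;D)\hookrightarrow C(\bar I_j;(X,D)_{1-1/p,p})$ shows that $v(\tau_{j+1})$ lies in the correct interpolation space with controlled norm, so the local estimates (with zero initial data on $I_0$ and with the propagated trace as initial data on the later intervals) can be chained across the finitely many --- namely $\le T/\eta$ --- subintervals; summing them produces \eqref{maxreg-nonaut} with a constant that now additionally depends on $T$ through the number of subintervals.

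The main obstacle is the first step: establishing maximal $L^p$-regularity, uniformly in the frozen time $s$, for $A(s)$ on $L^r(\varOmega)$ for general $1<r<\infty$ with only H\"older-continuous leading coefficients. For $r=2$ this is classical Hilbert-space (Lions) theory and essentially elementary, but for $r\ne2$ it genuinely requires the machinery of $\mathcal R$-sectoriality and operator-valued Fourier multipliers (or, alternatively, Gaussian heat-kernel bounds together with the transference results of Hieber--Pr\"uss), and one must check that every constant appearing is controlled solely by $\kappa,\mu,K,\varOmega,p,r$, so that the subsequent perturbation and gluing steps indeed yield a bound of the asserted form. The perturbation and gluing in the second step, by contrast, are routine once that uniform autonomous estimate is available; this is essentially the route by which non-autonomous maximal regularity is deduced from autonomous maximal regularity in the literature on which \cite{KuLL} draws.
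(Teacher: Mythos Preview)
The paper does not prove this lemma at all: it is quoted verbatim as \cite[Lemma~3.5]{KuLL} and used as a black box, with no argument supplied. So there is nothing to compare your proposal against in this paper.

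That said, your sketch is the standard route by which such non-autonomous maximal regularity results are obtained and is essentially the argument given in \cite{KuLL} (which in turn cites the underlying operator-theoretic literature). Your identification of the division of labour is accurate: the perturbation/gluing step is routine once one has the uniform-in-$s$ autonomous maximal regularity for $A(s)$ on $L^r(\varOmega)$, and the latter is where the real analytic input ($\mathcal R$-sectoriality, Weis' theorem, or heat-kernel estimates) is required. For the purposes of this paper, however, none of that is needed --- the lemma is simply imported.
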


The operators
$A(t)\colon W^{2,r}(\varOmega)\cap W^{1,r}_0(\varOmega)\to
L^r(\varOmega),\ t\in [0,T],$ are invertible and
\begin{equation}
\label{eq:invertibility}
\|v \|_{W^{2,r}(\varOmega)} \leqslant  c_r\|A( t) v\|_{L^r(\varOmega)},
\end{equation}
with $c_r$ a constant depending only on $K,\kappa,\varOmega$ and $r;$
see \cite[(3.10)]{KuLL}.
Moreover, the Lipschitz condition \eqref{eq:Lipsch} implies a
  Lipschitz condition for $A$, namely, for $t,s,\bar{t}\in [0,T]$, 
  \begin{equation}
    \label{eq:lipschitzforA}
    \| (A(t)-A(s))v\|_{L^r(\varOmega)}
    \leqslant \widetilde{L} |t-s| \| A(\bar{t})v\|_{L^r(\varOmega)}.
  \end{equation}

With starting value $\V_0=0,$ we consider the discretization of the
initial and boundary value problem \eqref{ivp-maxreg} by the $q$-stage
Radau~IIA method: we recursively define approximations
$\V_\ell\in W^{2,r}(\varOmega)\cap W^{1,r}_0(\varOmega)$ to the nodal
values $v(t_\ell):=v(\cdot,t_\ell)$, as well as internal
approximations
$\V_{\ell i}\in W^{2,r}(\varOmega)\cap W^{1,r}_0(\varOmega)$ to the
intermediate values $v(t_{\ell i}),$ by
\begin{equation}\label{eq:Rad_meth}
\left \{
\begin{alignedat}{2}
&\V_{ni}=\V_n+k \sum_{j=1}^q a_{ij} [A(t_{nj})\V_{nj}+f(t_{nj})],\quad &&i=1,\dotsc,q,\\
&\V_{n+1} = \V_n+ k \sum_{i=1}^q b_i [A(t_{ni})\V_{ni}+f(t_{ni})], \quad &&
\end{alignedat}
\right . 
\end{equation}
$n=0,\dotsc,N-1$. Notice that $\V_{n+1} =\V_{nq}$, since  $c_q=1$.

An important maximal parabolic regularity property of the Radau~IIA
method \eqref{eq:Rad_meth} for initial and boundary value problems of
the form \eqref{ivp-maxreg} is:

\begin{lemma}[Maximal regularity of Radau~IIA methods;
  {\cite[Lemma~3.6]{KuLL}}]\label{Le:maxreg-Radau}
  Under the assumptions of Lemma~\ref{Le:maxreg-nonaut}, the Radau~IIA
  method \eqref{eq:Rad_meth} has the discrete maximal $L^p$-regularity
  uniformly in the step size $k$\emph{:} for
  $f(t_{ni})\in L^r(\varOmega), n=0,\dotsc,N-1, i=1,\dotsc,q,$ the
  numerical solutions
  $\V_{ni}\in W^{2,r}(\varOmega)\cap W^{1,r}_0(\varOmega)$ are well
  defined and satisfy the estimates
\begin{equation}
\label{eq:Rad-maxreg1}
\begin{aligned}
\sum_{i=1}^q\|(\partial \V_{ni})_{n=0}^M\|_{\ell^p(L^r(\varOmega))}&+\sum_{i=1}^q\|(A(\bar t)\V_{ni})_{n=0}^M\|_{\ell^p( L^r(\varOmega))}\\
&\leqslant \widetilde C_{p,r}\sum_{i=1}^q\|(  f(t_{ni}))_{n=0}^M\|_{\ell^p(L^r(\varOmega))}, 
\end{aligned}
\end{equation}
for any $\bar t \in [0,T],$ and, consequently, 
\begin{equation}
\label{eq:Rad-maxreg}
\sum_{i=1}^q\|(\partial \V_{ni})_{n=0}^M\|_{\ell^p(L^r(\varOmega))}+\sum_{i=1}^q\|(\V_{ni})_{n=0}^M\|_{\ell^p( W^{2,r}(\varOmega))}
\leqslant C_{p,r}\sum_{i=1}^q\|(  f(t_{ni}))_{n=0}^M\|_{\ell^p(L^r(\varOmega))}, 
\end{equation}
$M=0,\dotsc,N-1,$ with $\V_{-1,1}=\dotsb= \V_{-1,q}=0.$ Besides the
specific Radau~IIA method, the constants $C_{p,r}$ and
$\widetilde C_{p,r}$ depend only on the H\"older exponent $\mu$ and
the corresponding bound $K$ in \eqref{eq:Holder}, on the coercivity
constant $\kappa$ in \eqref{eq:coercivity} and the Lipschitz constant
$L$ in \eqref{eq:Lipsch}, on $\varOmega, p,r,$ and on the time $T;$ in
particular, $C_{p,r}$ is independent of the time step $k$ and of $M$
such that $Mk\leqslant T.$
\end{lemma}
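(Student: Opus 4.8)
The plan is to reduce the non-autonomous discrete estimate to the autonomous (frozen-coefficient) one together with a time-localized perturbation argument, the standard route for results of this kind. \emph{Step 1: autonomous discrete maximal regularity.} For a fixed $\bar t\in[0,T]$, the constant-coefficient operator $A(\bar t)$ is covered by Lemma~\ref{Le:maxreg-nonaut}, hence has continuous maximal $L^p$-regularity on $L^r(\varOmega)$; since $L^r(\varOmega)$ is a UMD space, by Weis's theorem this is equivalent to $R$-sectoriality of $A(\bar t)$, with $R$-sectoriality constants that by Lemma~\ref{Le:maxreg-nonaut} are uniform in $\bar t$. Because the Radau~IIA stability function is bounded and vanishes at infinity, this is precisely the hypothesis under which \cite{KLL} provides discrete maximal $\ell^p$-regularity for the recursion $\V_{ni}=\V_n+k\sum_j a_{ij}\bigl[A(\bar t)\V_{nj}+g_{nj}\bigr]$, $\V_{n+1}=\V_{nq}$, uniformly in $k$: one has $\sum_i\|(\partial\V_{ni})_n\|_{\ell^p(L^r)}+\sum_i\|(A(\bar t)\V_{ni})_n\|_{\ell^p(L^r)}\le C\sum_i\|(g_{ni})_n\|_{\ell^p(L^r)}$ when $\V_0=0$, and a variant that carries a nonzero starting value $\V_0$ in the appropriate real-interpolation trace space. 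I would take this autonomous result from \cite{KLL} rather than reprove it.

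\emph{Step 2: time localization and frozen-coefficient perturbation.} A single frozen coefficient cannot be used globally, because the perturbation $\bigl(A(t_{nj})-A(\bar t)\bigr)\V_{nj}$ is controlled by \eqref{eq:lipschitzforA} only by $\widetilde L\,|t_{nj}-\bar t|\,\|A(\bar t)\V_{nj}\|_{L^r}$, and $\widetilde L T$ need not be small. So I would fix $\delta>0$, split $[0,T]$ into $\lceil T/\delta\rceil$ consecutive grid-adapted subintervals $I_m$ of length at most $2\delta$, pick a node $\bar t_m\in I_m$, and on $I_m$ rewrite the stages as $\V_{ni}=\V_n+k\sum_j a_{ij}\bigl[A(\bar t_m)\V_{nj}+f(t_{nj})+E_{nj}\bigr]$ with $E_{nj}:=\bigl(A(t_{nj})-A(\bar t_m)\bigr)\V_{nj}$, so that $\|E_{nj}\|_{L^r}\le 2\widetilde L\delta\,\|A(\bar t_m)\V_{nj}\|_{L^r}$. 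Applying the autonomous estimate of Step~1 on $I_m$ with starting value $\V$ at the left endpoint of $I_m$ (uniformly in $\bar t_m$) bounds $\sum_i\|(A(\bar t_m)\V_{ni})_{n\in I_m}\|_{\ell^p(L^r)}$ by $C\sum_i\|(f(t_{ni}))_{n\in I_m}\|_{\ell^p(L^r)}$ plus the trace norm of that starting value plus $2C\widetilde L\delta\sum_i\|(A(\bar t_m)\V_{ni})_{n\in I_m}\|_{\ell^p(L^r)}$; choosing $\delta$ so that $2C\widetilde L\delta\le\tfrac12$---which fixes $\delta$ in terms of $\mu,K,\kappa,L,\varOmega,p,r$ only, independently of $k$ and $T$---absorbs the last term.

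\emph{Step 3: chaining and conclusion.} It remains to propagate the starting values: the trace norm of $\V$ at the left endpoint of $I_{m+1}$ is bounded by the discrete maximal-regularity norm on $I_m$ plus the trace norm at the left endpoint of $I_m$, a discrete, uniform-in-$k$ analogue of the embedding $W^{1,p}(I;X_0)\cap L^p(I;X_1)\hookrightarrow C(\bar I;(X_0,X_1)_{1-1/p,p})$. Iterating over $m=0,\dots,\lceil T/\delta\rceil-1$ from $\V_0=0$ gives control of $\sum_i\|(A(\bar t_m)\V_{ni})_{n\in I_m}\|_{\ell^p(L^r)}$ on all subintervals; replacing $\bar t_m$ by an arbitrary common $\bar t$ costs only the factor $1+\widetilde L T$ by \eqref{eq:lipschitzforA}, yielding the $A(\bar t)\V_{ni}$ part of \eqref{eq:Rad-maxreg1}. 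The $\partial\V_{ni}$ part follows by solving the stage relations for $k\bigl[A(t_{ni})\V_{ni}+f(t_{ni})\bigr]$ using invertibility of $(a_{ij})$ and $\V_{n+1}=\V_{nq}$; then \eqref{eq:Rad-maxreg} is immediate from \eqref{eq:Rad-maxreg1} and the elliptic bound \eqref{eq:invertibility}. The final constant depends on $T$ only through $\lceil T/\delta\rceil$ and $1+\widetilde L T$, not on $k$ or on $M$ with $Mk\le T$.

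\emph{Main obstacle.} The hard input is Step~1, the autonomous discrete maximal $\ell^p$-regularity: it rests on $R$-boundedness of the family of resolvent-type operators built from the Radau~IIA stability function together with a discrete operator-valued Fourier-multiplier theorem, and I would invoke \cite{KLL} for it. The second delicate point is making the chaining in Step~3 precise---identifying the right discrete trace space and showing the endpoint map is bounded in it uniformly in $k$. Once these are in place the non-autonomous perturbation is routine, with \eqref{eq:lipschitzforA} supplying exactly the $O(\delta)$ smallness of the frozen-coefficient error that the absorption needs.
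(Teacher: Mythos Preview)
The paper does not prove this lemma at all: it is quoted verbatim as \cite[Lemma~3.6]{KuLL} and used as a black box, so there is no proof in the paper to compare against. Your outline---autonomous discrete maximal regularity from \cite{KLL} via $R$-sectoriality, then a frozen-coefficient perturbation using \eqref{eq:lipschitzforA} with a small time window, then chaining---is essentially the argument of \cite{KuLL} itself, so you have correctly reconstructed the proof the paper is importing rather than giving.

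One remark on your sketch: in \cite{KuLL} the perturbation step is handled not by slicing $[0,T]$ into subintervals of length $\delta$ and propagating trace values, but by a direct discrete Gronwall argument on the full interval (the Lipschitz bound \eqref{eq:lipschitzforA} yields a summable weight $|t_{nj}-\bar t|$ rather than a constant that must be made small). This avoids the ``delicate point'' you flag in Step~3 about identifying the discrete trace space and the uniform-in-$k$ endpoint bound. Your localization approach is correct in principle, but the Gronwall route is shorter and sidesteps that obstacle entirely; you can see the same mechanism at work in the paper's own proof of Lemma~\ref{Le:maxreg-dG} (the non-autonomous case there, around \eqref{na-dG-1}--\eqref{na-dG-6}), which reuses exactly this idea one level up.
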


Here, for a Banach space $X$ and a sequence
$(v_n)_{n\in \N_0}\subset X,$ we used the notation
\begin{equation}
  \label{eq:definediscretenorm}
  \partial v_n:=\frac {v_n-v_{n-1}}k\quad\text{and}\quad
  \|(v_n)_{n=0}^M\|_{\ell^p(X)}:=\Big (k \sum_{n=0}^M\|v_n\|_X^p\Big )^{1/p}
\end{equation}
for the backward difference quotient and for the discrete
$\ell^p(X)$-norm $\|\cdot\|_{\ell^p(X)}.$

\subsection{Discontinuous Galerkin methods}\label{SSe:dG}
We establish maximal regularity properties for dG methods for
non-autonomous linear parabolic equations.  These results are crucial for our
error analysis for dG methods for \eqref{ivp}; they are also of
independent interest.

For $n=1,\dotsc,N,$ we introduce the discrete (semi)norm
$\|\cdot \|_{\ell^p ((0,t_n);L^r(\varOmega) )}$ on
$\L_{k,0}^{\text{d}} (q-1):=\{v\in \L_k^{\text{d}} (q-1): v(0)=0\}$
and on $\L_{k,0}^{\text{c}} (q):=\{v\in \L_k^{\text{c}} (q): v(0)=0\}$
by
\begin{equation}
\label{eq:discrete-norm}
\|v \|_{\ell^p ((0,t_n);L^r(\varOmega) )}
:=\Big(\sum_{\ell=0}^{n-1}
\Big (k\sum_{i=1}^q \|v(t_{\ell i})\|_{L^r(\varOmega)}^p\Big)\Big)^{1/p}. 
\end{equation}
Since this only uses values from $[0,t_n]$, it is a norm on
  $\L_{k,0}^{\text{d}} (q-1)$ only for $t_n=T$.  The connection to 
  the standard $\ell^p(X)$-(semi)norm used in
  Lemma~\ref{Le:maxreg-Radau} and defined in
  \eqref{eq:definediscretenorm} is given by
  \begin{equation}
    \label{eq:connectionbetweennorms}
    \sum_{i=1}^q \| (v(t_{\ell i}))_{\ell=0}^{n-1} \|_{L^r(\varOmega)}^p
    =\|v \|_{\ell^p ((0,t_n);L^r(\varOmega) )}^p .
  \end{equation}

The continuous $L^p(L^r(\varOmega))$-(semi)norms and the discrete
$\ell^p(L^r(\varOmega))$-(semi)norms are equivalent, with constants
independent of $k$ and $n;$ see \cite[\textsection 2.1]{AL-linear}.
In particular, we have
\begin{equation}
\label{eq:discrete-norm-dom}
\|v \|_{L^p ((0,t_n);L^r(\varOmega) )}
\leqslant \tilde c_{q,p} \|v \|_{\ell^p ((0,t_n);L^r(\varOmega) )}
\quad \forall v\in \L_{k,0}^{\text{d}} (q-1),\ n=1,\dotsc,N.
\end{equation}
Indeed, this is proved by expressing $v$ in terms of
$\{\ell_{mi}\}_{i=1}^{q}$, the Lagrange polynomials
$ \ell_i\in \P_{q-1}$ for the Radau points $c_1,\dotsc,c_q,$ shifted
to each subinterval $ J_m$.

It is obvious from \eqref{wU_def-inter} that the discrete
$\ell^p(L^r(\varOmega))$-seminorms of $v$ and of its reconstruction
$\hat v\in \L_{k,0}^{\text{c}} (q)$ coincide,
\begin{equation}
\label{eq:discrete-norm-equal}
\|\hat v \|_{\ell^p ((0,t_n);L^r(\varOmega) )}=\|v \|_{\ell^p ((0,t_n);L^r(\varOmega) )}
\quad \forall v\in \L_{k,0}^{\text{d}} (q-1).
\end{equation}
%

We shall use the notation $\partial_k$ for the backward difference operator,
\[\partial_k\wV:=\frac {\wV (\cdot)- \wV (\cdot-k)}k,\]
for the  reconstruction $\wV$ of the discontinuous Galerkin approximation 
$V\in  \V_k^{\text{d}} (q-1)$ for  \eqref{ivp-maxreg}, 
with $\wV=0$ in the interval $[-k,0).$

Combining Lemma~\ref{Le:maxreg-Radau} with ideas and results from
\cite{AM-SINUM}, we obtain the following maximal regularity properties
for dG methods:

\begin{lemma}[Maximal regularity of dG methods in continuous norms]\label{Le:maxreg-dG}
  Under the assumptions of Lemma~\ref{Le:maxreg-nonaut}, the
  reconstruction $\wV$ of the discontinuous Galerkin approximation
  $V\in \V_k^{\text{d}} (q-1)$ for \eqref{ivp-maxreg} has the maximal
  $L^p$-regularity property, uniformly in the step size $k,$
\begin{equation}
\label{eq:dG-continuous-mr1}
\begin{aligned}
  \| \partial_k\wV\|_{\ell^p((0,T);L^r(\varOmega))}
  &+\| \wV_t\|_{L^p((0,T);L^r(\varOmega))}+\| A(\bar t)\wV\|_{L^p((0,T);L^r(\varOmega))}\\
  &+\| A(\bar t)V\|_{L^p((0,T);L^r(\varOmega))}
  \leqslant \widetilde C_{p,q,T} \|f\|_{L^p((0,T);L^r(\varOmega))},  
\end{aligned}
\end{equation}
for any $\bar t\in [0,T],$ and, consequently,
\begin{equation}
\label{eq:dG-continuous-mr2}
\begin{aligned}
  \| \partial_k\wV\|_{\ell^p((0,T);L^r(\varOmega))}
  &+\| \wV_t\|_{L^p((0,T);L^r(\varOmega))}+\| \wV\|_{L^p((0,T);W^{2,r}(\varOmega))}\\
  &+\|V\|_{L^p((0,T);W^{2,r}(\varOmega))}
  \leqslant C_{p,q,T} \|f\|_{L^p((0,T);L^r(\varOmega))},
\end{aligned}
\end{equation}
where $C_{p,q,T}$ and $\widetilde C_{p,q,T}$ are constants independent
of $k$ and $N$ such that $Nk\leqslant T.$ Notice that
$\| A(\bar t)\cdot\|_{L^p((0,T);L^r(\varOmega))}$ and
$\| A\cdot\|_{L^p((0,T);L^r(\varOmega))}$ are equivalent norms; see
Remark~\ref{Re:equiv}.
\end{lemma}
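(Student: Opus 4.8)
The plan is to reduce the assertion to the discrete maximal regularity of the $q$-stage Radau~IIA method, Lemma~\ref{Le:maxreg-Radau}, by exploiting the interpretation of dG as a \emph{modified} Radau~IIA method from \cite{AM-SINUM}. Specializing the pointwise dG equation \eqref{dgwU_pw} to the linear problem \eqref{ivp-maxreg} gives $\wV_t=P_{q-1}\big(A(\cdot)V+f\big)$ on each $J_n$; the dG equation is uniquely solvable per time step, being equivalent to a coercive linear system, so $V$ and hence $\wV$ are well defined. Since the $q$-point Radau rule is exact for polynomials of degree $\leqslant 2q-2$, the mass matrix $\big(\int_{J_n}\ell_{ni}\ell_{nj}\,\d t\big)_{i,j}$ is diagonal and equals $(kb_i\delta_{ij})_{i,j}$; hence, evaluating the projection at the Radau nodes, $\wV_t(t_{ni})=\tfrac1{kb_i}\int_{J_n}\big(A(\cdot)V+f\big)\ell_{ni}\,\d t$. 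Writing $A(t)=A(t_{ni})+\big(A(t)-A(t_{ni})\big)$ and using $\int_{J_n}V\ell_{ni}\,\d t=kb_iV_{ni}$ (exactness, with $V_{ni}:=V(t_{ni})$), this becomes $\wV_t(t_{ni})=A(t_{ni})V_{ni}+g_{ni}$ with $g_{ni}:=\bar f_{ni}+\rho_{ni}$, where $\bar f_{ni}:=\tfrac1{kb_i}\int_{J_n}f\ell_{ni}\,\d t$ and $\rho_{ni}:=\tfrac1{kb_i}\int_{J_n}\big(A(t)-A(t_{ni})\big)V(t)\ell_{ni}(t)\,\d t$. Integrating the degree-$(q-1)$ polynomial $\wV_t$ from $t_n$ and using \eqref{RK-coef} together with $c_q=1$, one checks that the stage values $V_{ni}$ solve exactly the Radau~IIA scheme \eqref{eq:Rad_meth} with $f(t_{nj})$ replaced by $g_{nj}$.

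Next I would apply Lemma~\ref{Le:maxreg-Radau} to this scheme, obtaining, for any $\bar t\in[0,T]$ and with $V_{-1,i}=0$, the bound $\sum_i\|(\partial V_{ni})_{n=0}^{N-1}\|_{\ell^p(L^r(\varOmega))}+\sum_i\|(A(\bar t)V_{ni})_{n=0}^{N-1}\|_{\ell^p(L^r(\varOmega))}\leqslant \widetilde C_{p,r}\sum_i\|(g_{ni})_{n=0}^{N-1}\|_{\ell^p(L^r(\varOmega))}$. For the right-hand side, H\"older's inequality yields $\|(\bar f_{ni})_n\|_{\ell^p(L^r(\varOmega))}\leqslant c\,\|f\|_{L^p((0,T);L^r(\varOmega))}$, while the Lipschitz estimate \eqref{eq:lipschitzforA} (with $|t-t_{ni}|\leqslant k$) together with the equivalence of the discrete and continuous $L^p(L^r(\varOmega))$-norms on the finite-dimensional polynomial spaces (cf.\ \eqref{eq:discrete-norm-dom}) gives $\|(\rho_{ni})_n\|_{\ell^p(L^r(\varOmega))}\leqslant c\,k\sum_i\|(A(\bar t)V_{ni})_n\|_{\ell^p(L^r(\varOmega))}$. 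Choosing $k_0$ so small that $c\,k_0\widetilde C_{p,r}\leqslant\tfrac12$, the $\rho$-term is absorbed into the left-hand side for $k\leqslant k_0$ (for $k_0<k\leqslant T$ there are boundedly many time steps and the estimate is immediate), leaving $\sum_i\|(\partial V_{ni})_n\|_{\ell^p(L^r(\varOmega))}+\sum_i\|(A(\bar t)V_{ni})_n\|_{\ell^p(L^r(\varOmega))}\leqslant C\,\|f\|_{L^p((0,T);L^r(\varOmega))}$.

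Finally I would pass from these nodal $\ell^p$-estimates to the continuous norms of \eqref{eq:dG-continuous-mr1}. On each $J_n$ one has $V(t)=\sum_{i=1}^q\ell_{ni}(t)V_{ni}$, $\wV(t)=\sum_{j=0}^q\hat\ell_{nj}(t)V_{nj}$ with $\hat\ell_{nj}$ the degree-$q$ Lagrange polynomials for $t_{n0},\dots,t_{nq}$ and $V_{n0}=V_n=V_{n-1,q}$, and $\wV_t(t)=\sum_{i=1}^q\ell_{ni}(t)\big(A(t_{ni})V_{ni}+g_{ni}\big)$; since $A(\bar t)$ passes through the scalar Lagrange coefficients, the polynomial norm equivalence converts every $\|\cdot\|_{L^p(J_n;L^r(\varOmega))}$ into the corresponding nodal $\ell^p$-quantity, using in addition $\|(A(t_{ni})-A(\bar t))V_{ni}\|_{L^r(\varOmega)}\leqslant\widetilde L\,T\,\|A(\bar t)V_{ni}\|_{L^r(\varOmega)}$ from \eqref{eq:lipschitzforA} and the bound on $\bar f_{ni}$. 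For the backward difference, $\partial_k\wV(t_{ni})=(V_{ni}-V_{n-1,i})/k=\partial V_{ni}$, so by \eqref{eq:connectionbetweennorms} the term $\|\partial_k\wV\|_{\ell^p((0,T);L^r(\varOmega))}$ reduces to $\sum_i\|(\partial V_{ni})_n\|_{\ell^p(L^r(\varOmega))}$. This establishes \eqref{eq:dG-continuous-mr1}, and \eqref{eq:dG-continuous-mr2} follows by applying the elliptic estimate \eqref{eq:invertibility}, $\|v\|_{W^{2,r}(\varOmega)}\leqslant c_r\|A(\bar t)v\|_{L^r(\varOmega)}$, to $V(t)$ and $\wV(t)$ for each $t$. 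The crux of the argument is the first step: establishing through \cite{AM-SINUM} that the dG stage values satisfy a genuine Radau~IIA scheme whose data differs from the natural one only by the norm-equivalent averaging $f\mapsto\bar f_{ni}$ and by the $O(k)$ perturbation $\rho_{ni}$ that can be absorbed; everything afterwards is a routine combination of Lemma~\ref{Le:maxreg-Radau} with polynomial norm equivalences and elliptic regularity.
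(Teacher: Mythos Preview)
Your overall strategy is exactly the paper's: write dG as a modified Radau~IIA scheme via \cite{AM-SINUM}, apply Lemma~\ref{Le:maxreg-Radau} to the modified data $g_{ni}=\bar f_{ni}+\rho_{ni}$, bound $(\bar f_{ni})$ by $\|f\|_{L^p(L^r)}$ via H\"older, then pass from the nodal $\ell^p$ bounds to the continuous norms by Lagrange expansion (for $A(\bar t)V$, $A(\bar t)\wV$, $\partial_k\wV$) and the pointwise equation together with the $L^p$-boundedness of $P_{q-1}$ (for $\wV_t$), finishing with \eqref{eq:invertibility} to get \eqref{eq:dG-continuous-mr2}. Your sketches of these steps are accurate and match the paper.

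The one genuine gap is your treatment of the perturbation $\rho_{ni}$. From $|t-t_{ni}|\leqslant k$ you get $\sum_i\|(\rho_{ni})_n\|_{\ell^p}\leqslant ck\sum_i\|(A(\bar t)V_{ni})_n\|_{\ell^p}$ and absorb for $k\leqslant k_0$; for $k>k_0$ you assert the estimate is ``immediate'' because $N$ is bounded. It is not: a bounded number of steps does not by itself yield a $k$-uniform bound on the infinite-dimensional single-step solution operator, and your absorption inequality $(1-\tilde C_{p,r}ck)\cdot\text{LHS}\leqslant C\|f\|$ carries no information once $\tilde C_{p,r}ck\geqslant 1$. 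The paper circumvents this entirely: it applies Lemma~\ref{Le:maxreg-Radau} on $(0,t_m]$ for each $m$, uses instead the $n$-dependent bound $\|(A(s)-A(t_{ni}))V(s)\|\leqslant \tilde L(t_m-t_n)\|A(t_m)V(s)\|$ for $s\in J_n$, and after a summation by parts obtains $Z_m\leqslant c\sum_{\ell=1}^m a_\ell E_\ell$ with $a_\ell=(t_m-t_{\ell-1})^p-(t_m-t_\ell)^p$ and $\sum_\ell a_\ell=t_m^p\leqslant T^p$. A discrete Gronwall lemma then gives the discrete maximal regularity with a constant depending on $T$ but \emph{not} on $k$. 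This Gronwall device (from \cite{AM-NM}) is exactly what replaces your small-$k$ absorption and delivers the uniformity in $k$ that the lemma claims; you should adopt it in place of the hand-wave for large $k$.
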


\begin{proof}
  With starting value $V(0)=V_0=0,$ the discretization of the initial
  value problem \eqref{ivp-maxreg} by the dG$(q-1)$ method is to seek
  $V\in \V_k^{\text{d}} (q-1)$ such that
\begin{equation}
\label{dg-na-sse}
\int_{J_n}   \big[( V_t ,v )  - ( A(t)V ,v ) \big] \, \d t  + ( V_n^{+}-V_n, v_n^{+})
= \int_{J_n} (f,v)\, \d t \quad \forall v \in \P(q-1)
\end{equation}
for $n=0,\dotsc,N-1$; cf.~\eqref{dg}.  %
With $\wV\in \V_k^{\text{c}} (q)$, the reconstruction of
$V\in \V_k^{\text{d}} (q-1)$ and $P_{q-1}$ the piecewise
$L^2$-projection onto $\L_k^{\text{d}}(q-1)$,
relation~\eqref{dg-na-sse} can be written as a \emph{pointwise
  equation}
\begin{equation}
\label{dg-na_pw}
\wV_t -  P_{q-1}(AV)=P_{q-1} f;
\end{equation}
cf.~\eqref{dgwU_pw}. In particular, in the case of autonomous
equations, we have $AV\in \L_k^{\text{d}}(q-1)$ and \eqref{dg-na_pw}
reduces to
\begin{equation}
\label{dg-a_pw}
\wV_t -  AV=P_{q-1} f.
\end{equation}

Let $V_n=V(t_n), V_{nj}=V(t_{n j}), j=1,\dotsc,q, $ denote the nodal
and intermediate values of the approximate solution
$V\in \V_k^{\text{d}} (q-1);$ see \eqref{dg-na-sse}.  We shall prove
in the sequel, separately for the autonomous and the non-autonomous
cases, that the dG method satisfies the \emph{discrete} maximal
regularity estimate
\begin{equation}
\label{eq:dG-discrete-mr1}
\sum_{i=1}^q\|(\partial V_{ni})_{n=0}^{N-1}\|_{\ell^p(L^r(\varOmega))}
+\sum_{i=1}^q\|(A(\bar t)V_{ni})_{n=0}^{N-1}\|_{\ell^p(L^r(\varOmega))}
\leqslant C_{p,r}\|f\|_{L^p((0,T); L^r(\varOmega))}
\end{equation}
for any $\bar t \in [0,T],$ with a constant $C_{p,r}$ independent of
$N;$ here, $V_{-1,1}=\dotsb= V_{-1,q}=0.$

Assuming \eqref{eq:dG-discrete-mr1} for the time being, we show that
it leads to the asserted \emph{continuous} maximal regularity property
\eqref{eq:dG-continuous-mr1} of the reconstruction $\wV;$ see
\cite[Theorem 1.1]{AM-SINUM}.  To prove \eqref{eq:dG-continuous-mr1},
we first notice that
\begin{equation}
\label{eq:dG-discrete-continuous1n}
\| \partial_k\wV\|_{\ell^p((0,T);L^r(\varOmega))}^p
=\sum_{i=1}^q\|(\partial V_{ni})_{n=0}^{N-1}\|_{\ell^p(L^r(\varOmega))}^p.
\end{equation}
Moreover, in view of \eqref{eq:discrete-norm-dom},
\begin{equation}
\label{eq:discrete-norm-dom2}
\| A (\bar t)V \|_{L^p ((0,T);L^r(\varOmega) )}
\leqslant \tilde c_{q,p} \|A (\bar t)V \|_{\ell^p ((0,T);L^r(\varOmega) )}.
\end{equation}
Similarly to the proof of \eqref{eq:discrete-norm-dom}, by expanding
$\wV$ in $\widehat \ell_{ni}$, the Lagrange polynomials
$\widehat \ell_i\in \P_q$ for the points $c_0=0,c_1,\dotsc,c_q=1,$
shifted to the interval $\bar J_n$, and with $ V_{n0} = V_n,$ we can
prove that
\begin{equation}
\label{eq:discrete-norm-dom3}
\| A (\bar t)\wV \|_{L^p ((0,T);L^r(\varOmega) )}\leqslant c \|A (\bar t)V \|_{\ell^p ((0,T);L^r(\varOmega) )};
\end{equation}
see also \cite[\textsection 2.3]{AM-SINUM}.

Furthermore, it is easily seen that 
\[\|P_{q-1}\varphi\|_{L^p((0,T);L^r(\varOmega))} \leqslant c_{p,q} \|\varphi\|_{L^p((0,T);L^r(\varOmega))}\]
(see  \cite[\textsection 2.3]{AM-SINUM}), whence \eqref{dg-na_pw} yields
\[\|\wV_t \|_{L^p((0,T);L^r(\varOmega))} \leqslant  c_{p,q}\big (\|A V\|_{L^p((0,T);L^r(\varOmega))}+\|f\|_{L^p((0,T);L^r(\varOmega))}\big ),\]
and thus, in view of the equivalence of the norms $\| A(\bar t)\cdot\|_{L^p((0,T);L^r(\varOmega))}$ and $\| A\cdot\|_{L^p((0,T);L^r(\varOmega))}$,
\begin{equation}
\label{eq:dG-discrete-continuous2}
\|\wV_t \|_{L^p((0,T);L^r(\varOmega))} \leqslant  \tilde c_{p,q}\big (\|A(\bar t)  V\|_{L^p((0,T);L^r(\varOmega))}+\|f\|_{L^p((0,T);L^r(\varOmega))}\big ).
\end{equation}
Now, \eqref{eq:dG-continuous-mr1} follows immediately from \eqref{eq:dG-discrete-mr1}  in view of \eqref{eq:dG-discrete-continuous1n},  
\eqref{eq:discrete-norm-dom2}, \eqref{eq:discrete-norm-dom3}, and \eqref{eq:dG-discrete-continuous2}.
The estimate \eqref{eq:dG-continuous-mr2}  follows from \eqref{eq:dG-continuous-mr1}  and \eqref{eq:invertibility}.

Let us now turn to the proof of the discrete maximal regularity
estimate \eqref{eq:dG-discrete-mr1}.  We recall from
\cite[\textsection 3.3]{AM-SINUM}, that the dG method
\eqref{dg-na-sse} can be viewed as a modified Radau~IIA method: The dG
approximations $V_n=V(t_n), V_{nj}=V(t_{n j}), j=1,\dotsc,q, $ for the
non-autonomous parabolic equation \eqref{ivp-maxreg}, given in
\eqref{dg-na-sse}, satisfy the modified Radau~IIA method
\begin{equation}\label{eq:Rad-1-dG-na-sse}
\left \{
\begin{alignedat}{2}
&V_{ni}=V_n+ k \sum_{j=1}^q a_{ij} \big (A(t_{nj}) V_{nj}  +f_{nj} + \zeta_{nj} \big ),\quad &&i=1,\dotsc,q,\\
&V_{n+1} =V_n+ k \sum_{i=1}^q b_i \big (A(t_{ni}) V_{ni}  +f_{n i}+ \zeta_{ni} \big ), \quad &&
\end{alignedat}
\right . 
\end{equation}
$n=0,\dotsc,N-1,$ with $f_{ni}$ the averages
\begin{equation}\label{average}
f_{ni}:=\frac 1 {\int _{J_n} \ell_{ni} (s) \, \d s }  \int _{J_n}  \ell_{ni} (s)f(s)  \, \d s
=\frac 1 {b_ik }  \int _{J_n}  \ell_{ni} (s) f(s)  \, \d s,\quad  i=1,\dotsc,q,
\end{equation}
and with $\zeta_{ni}$ the modifications 
\begin{equation}\label{average-A-sse}
\zeta_{ni}: =\frac 1 {b_ik } \int _{J_n}  \ell _{ni} (s)A(s) V(s)  \, \d s - A(t_{ni}) V_{ni},\quad  i=1,\dotsc,q.
\end{equation}
Here we denote by $\ell_{ni}$ the Lagrange polynomials
$ \ell_i\in \P_{q-1}$ for the Radau nodes $c_1,\dotsc,c_q,$ shifted to
the interval $J_n$. Again, $V_{n+1} =V_{nq}.$ Furthermore,
\eqref{eq:Rad-1-dG-na-sse} written in terms of the reconstruction
$\widehat V$ of $V$ is a modified collocation method in each interval
$J_n$ with starting value $\widehat V_{n0}=V_n$, namely
\begin{equation}\label{DG-collocation-na-sse}
\widehat V_t(t_{ni}) = A(t_{ni}) \widehat V(t_{ni})+ f_{ni} + \zeta _{ni}, \quad i=1,\dotsc, q.
\end{equation}

\emph{The autonomous case.}  Notice that in the case of a
time-independent operator $A,$ the terms $\zeta_{ni}$ vanish due to
the fact that the Radau quadrature formula with nodes
$t_{n1},\dotsc, t_{nq}$ integrates polynomials of degree up to $2q-2$
exactly, and \eqref{eq:Rad-1-dG-na-sse} simplifies to
\begin{equation}\label{eq:Rad-1-dG}
\left \{
\begin{alignedat}{2}
&V_{ni}=V_n+ k \sum_{j=1}^q a_{ij} \big (AV_{nj}  +f_{nj}  \big ),\quad &&i=1,\dotsc,q,\\
&V_{n+1} =V_n+ k \sum_{i=1}^q b_i \big (AV_{ni}  +f_{n i} \big ), \quad &&
\end{alignedat}
\right . 
\end{equation}
$n=0,\dotsc,N-1.$ 

First, from Lemma~\ref{Le:maxreg-Radau} and the modified Radau~IIA
formulation \eqref{eq:Rad-1-dG} of the dG method, we obtain the
preliminary stability estimate
\begin{equation*}
\label{eq:dG-prel1}
\sum_{i=1}^q\|(\partial V_{ni})_{n=0}^{N-1}\|_{\ell^p(L^r(\varOmega))}
+\sum_{i=1}^q\|(AV_{ni})_{n=0}^{N-1}\|_{\ell^p(L^r(\varOmega))}
\leqslant C_{p,r}\sum_{i=1}^q\|(  f_{ni})_{n=0}^{N-1}\|_{\ell^p(L^r(\varOmega))} 
\end{equation*}
with a constant $C_{p,r}$ independent of $N$ and $T.$ Let us
abbreviate $L^r(\varOmega)$ by $X.$ Since the term on the right-hand
side can be estimated in the form
\begin{equation}
\label{bound_f_intro}
\sum_{i=1}^q\|(  f_{ni})_{n=0}^{N-1}\|_{\ell^p(X)}
\leqslant  C \, \|f\|_{L^p((0,T);X)} 
\end{equation}
with a constant $C$ depending only on
$b_1,\dotsc,b_q,c_1\dotsc,c_q,$ and $p,$ (see
\cite[(2.12)]{AM-SINUM}), we are led to the desired discrete maximal
regularity estimate \eqref{eq:dG-discrete-mr1}.

\emph{The non-autonomous case.}  Having written the dG method for the
non-autonomous equation \eqref{ivp-maxreg} as a modified $q$-stage
Radau~IIA method, cf.~\eqref{eq:Rad-1-dG-na-sse}, we can apply
Lemma~\ref{Le:maxreg-Radau} to obtain (with $\bar t=t_m$)
\begin{equation}
\label{na-dG-1}
\begin{aligned}
  &\sum_{i=1}^q\|(\partial V_{ni})_{n=0}^{m-1}\|_{\ell^p(X)}
  +\sum_{i=1}^q\|(A(t_m)V_{ni})_{n=0}^{m-1}\|_{\ell^p(X)} \\
  &\qquad \leqslant C_{p,T}\sum_{i=1}^q\|(  f_{ni})_{n=0}^{m-1}\|_{\ell^p(X)} 
  +C_{p,T}\sum_{i=1}^q\|(  \zeta_{ni})_{n=0}^{m-1}\|_{\ell^p(X)},
\end{aligned}
\end{equation}
$m=1,\dotsc,N,$ with a constant $C_{p,T}$, independent of $m$ and $k,$
depending only on the method, i.e., on $q,$ and on
$\mu, K, \kappa, L, \varOmega, p, r,$ and $T.$ Again we use the
abbreviation $X=L^r(\varOmega)$.

We shall now show that the second term on the right-hand side of
\eqref{na-dG-1} can be absorbed in its left-hand side. This in
combination with \eqref{bound_f_intro} leads then to the desired
result.

Following the proof of the analogous result for Radau~IIA methods in
\cite{AM-NM}, we let
\[Z_m:=\sum_{i=1}^q\|(  \zeta_{ni})_{n=0}^{m-1}\|_{\ell^p(X)}^p\]
and 
\[E_\ell:=k\sum_{j=0}^{\ell-1} \sum_{i=1}^q\|A(t_m)V_{ji}\|_X^p,
  \quad \ell=1,\dotsc,m, \quad  E_0:=0.\]
According to estimate \eqref{na-dG-1}, we have
\begin{equation}\label{na-dG-3}
 E_m\leqslant C\sum_{i=1}^q\|( f_{ni})_{n=0}^{m-1}\|_{\ell^p(X)}^p+C Z_m.
\end{equation}

Since the Radau quadrature formula with nodes $t_{ni}, i=1,\dotsc,q,$
integrates polynomials of degree up to $2q-2$ exactly, we have
\[ \int _{J_n}  \ell _{ni} (s)A(t_{ni}) V(s)  \, \d s
  =  b_ik A(t_{ni}) V_{ni},\quad  i=1,\dotsc,q,\]
and can rewrite $\zeta_{ni}$ in the form
\begin{equation}\label{na-dG-4}
\zeta_{ni}=\frac 1 {b_ik }  \int _{J_n}  \ell _{ni} (s)[A(s)-A(t_{ni})] V(s)  \, \d s,\quad  i=1,\dotsc,q.
\end{equation}
Therefore, via H\"older's inequality,
\[k  \|  \zeta_{ni}\| _X ^p 
 \leqslant \gamma  \int _{J_n} \|  [A(s)-A(t_{ni})] V(s)  \|  _X^p   \, \d s   \]
with
$\gamma:=\max_{1\leqslant i \leqslant q} \Big ( \frac 1 {b_{i}^{p'} }
{\int _0^1 | \ell _{i} (\tau) |^{p'} \d\tau} \Big ) ^{p/p'}.$ By the
Lipschitz condition \eqref{eq:lipschitzforA}, we have, for $s\in J_n,$
\[\|  [A(s)-A(t_{ni})] V(s)  \|_X\leqslant \tilde{L} (t_m-t_n) \| A(t_m) V(s)  \|_X,\]
whence
\[ k  \|  \zeta_{ni}\| _X ^p 
  \leqslant \gamma  \tilde{L}(t_m-t_n)^p
  \int _{J_n}  \| A(t_m) V(s)  \|_X^p   \, \d s .\] 
By an analogue of the proof of \eqref{eq:discrete-norm-dom}, we infer that 
 \[ \int _{J_n}  \| A(t_m) V(s)  \|_X^p   \, \d s
 \leqslant c_{p,q} k \sum_{j=1}^q\|A(t_m)V_{nj}\|_X^p,\]
whence
\begin{equation}
\label{na-dG-5}
  k  \sum_{i=1}^q\|  \zeta_{ni}\| _X ^p 
 \leqslant c_{p,q}\gamma  \tilde{L} (t_m-t_n)^p  k \sum_{i=1}^q\|A(t_m)V_{ni}\|_X^p . 
\end{equation}
Therefore, applying also summation by parts, we get
\begin{align*}
  Z_m & \leqslant c_{p,q}\gamma  \tilde{L} k\sum_{\ell=0}^{m-1} (t_m-t_\ell)^p\sum_{i=1}^q\|A(t_m)V_{\ell i}\|_X^p
      =c_{p,q}\gamma  \tilde{L} \sum_{\ell=1}^m
        (t_m-t_{\ell-1})^p(E_\ell-E_{\ell-1}) \\
      &  =c_{p,q}\gamma  \tilde{L} \sum_{\ell=1}^m a_\ell E_\ell,
        \quad a_\ell = (t_m-t_{\ell-1})^p-(t_m-t_{\ell})^p.
\end{align*}
Now \eqref{na-dG-3} implies
\begin{align*}
  E_m\leqslant C\sum_{i=1}^q\|( f_{ni})_{n=0}^{m-1}\|_{\ell^p(X)}^p
  + C\sum_{\ell=1}^m a_\ell E_\ell,
\end{align*}
and, since $\sum_{\ell=0}^ma_\ell=T^p$, the discrete Gronwall lemma
implies
\begin{align*}
  E_m\leqslant C\sum_{i=1}^q\|( f_{ni})_{n=0}^{m-1}\|_{\ell^p(X)}^p.
\end{align*}
By inserting this into \eqref{na-dG-1}, we obtain the estimate
\begin{equation}
\label{na-dG-6}
\sum_{i=1}^q\|(\partial V_{ni})_{n=0}^{m-1}\|_{\ell^p(X)}+\sum_{i=1}^q\|(A(t_m)V_{ni})_{n=0}^{m-1}\|_{\ell^p(X)}
\leqslant C_{p,T}\sum_{i=1}^q\|(  f_{ni})_{n=0}^{m-1}\|_{\ell^p(X)}
\end{equation}
%
$m=1,\dotsc,N,$ with a constant $C_{p,T}$ independent of $m$ and $k.$ 

Finally, this estimate in combination with \eqref{bound_f_intro} yields the
discrete maximal parabolic regularity estimate \eqref{eq:dG-discrete-mr1}
for dG methods for non-autonomous linear equations. 
\end{proof}

\begin{remark}[Equivalence of norms $\| A(\bar t)\cdot\|_{L^p((0,T);L^r(\varOmega))}$ 
and $\| A\cdot\|_{L^p((0,T);L^r(\varOmega))}$]\label{Re:equiv}
The symmetric matrices $\AA(\cdot,t)$ with entries $\alpha_{ij}(\cdot,t)$ are invertible
according to  \eqref{eq:coercivity} and their entries are bounded from above according to \eqref{eq:Holder}.
The norms $\|\AA(x,t)\cdot \|_r$  and $\|\AA(x,t)\cdot \|_2$ (for fixed $x$ and $t$) are equivalent on $\R^d$ with constants depending only
on $r.$ Furthermore, the norms $\|\AA(x,t)\cdot\|_2$ are equivalent for all $t$ and $x,$ with constants depending only
on $\kappa$ and $K.$ These properties lead to the equivalence of the norms $\| A(\bar t)\cdot\|_{L^p((0,T);L^r(\varOmega))}$ 
and $\| A\cdot\|_{L^p((0,T);L^r(\varOmega))}$.
\end{remark}

\section{Background material}\label{Se:background}

We recall a space-time Sobolev embedding that will be crucial in the sequel.

\begin{lemma}[Space-time Sobolev embedding; {\cite[Lemma 3.1]{KuLL}}]\label{Le:Sob-embedd}
  If $2/p + d/r < 1,$ then the following embeddings hold:
\begin{align}\label{Sobolev-emb1}
&W^{1,p}(\R_+;L^r(\varOmega))\cap L^p(\R_+;W^{2,r}(\varOmega)) \hookrightarrow L^\infty(\R_+;W^{1,\infty}(\varOmega)),\\
&W^{1,p}((0,T);L^r(\varOmega))\cap L^p((0,T);W^{2,r}(\varOmega)) \hookrightarrow L^\infty((0,T);W^{1,\infty}(\varOmega)) 
\label{Sobolev-emb2}
\end{align}
and the second embedding is compact.
\end{lemma}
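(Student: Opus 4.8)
The plan is to obtain both embeddings from the classical trace characterization of maximal-regularity spaces, and to get compactness by an Aubin--Lions--Simon argument afterwards. First I would reduce the bounded-interval statement \eqref{Sobolev-emb2} to the half-line one \eqref{Sobolev-emb1}: given $u$ on $(0,T)$ with $\|u\|_{W^{1,p}((0,T);L^r(\varOmega))}+\|u\|_{L^p((0,T);W^{2,r}(\varOmega))}<\infty$, one extends $u$ in the time variable to $\R_+$ by an operator that is bounded $W^{1,p}((0,T);X)\cap L^p((0,T);Y)\to W^{1,p}(\R_+;X)\cap L^p(\R_+;Y)$ for any Banach spaces $X,Y$ (reflect at $t=T$ and multiply by a smooth cutoff supported in $[0,2T]$; this preserves both factors since the reflection is continuous at $t=T$). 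So it suffices to establish \eqref{Sobolev-emb1} and then restrict.

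For \eqref{Sobolev-emb1} I would set $X_0:=L^r(\varOmega)$, $X_1:=W^{2,r}(\varOmega)$, write $\mathbf E_+:=W^{1,p}(\R_+;X_0)\cap L^p(\R_+;X_1)$, and invoke the trace theorem for maximal-regularity spaces, namely $\mathbf E_+\hookrightarrow BUC\big(\R_+;(X_0,X_1)_{1-1/p,\,p}\big)$, where $(\cdot,\cdot)_{\theta,p}$ is the real interpolation functor; this is the step I would lean on a reference for (Sobolevskii's mixed-derivative theorem; Lions--Magenes; Amann; Pr\"uss--Simonett), and it is exactly what converts ``one time derivative and two space derivatives'' into a single intermediate norm. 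On a bounded smooth domain $(X_0,X_1)_{1-1/p,p}=B^{2-2/p}_{r,p}(\varOmega)$, and the purely elliptic Besov--Sobolev embedding $B^{s}_{r,p}(\varOmega)\hookrightarrow C^1(\overline\varOmega)\hookrightarrow W^{1,\infty}(\varOmega)$ holds as soon as $s-d/r>1$; with $s=2-2/p$ this is precisely the hypothesis $2/p+d/r<1$. Composing, $\mathbf E_+\hookrightarrow BUC(\R_+;W^{1,\infty}(\varOmega))\hookrightarrow L^\infty(\R_+;W^{1,\infty}(\varOmega))$, and restriction gives both \eqref{Sobolev-emb1} and \eqref{Sobolev-emb2}.

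Finally, for compactness of \eqref{Sobolev-emb2}, let $\{u_n\}$ be bounded in $\mathbf E_T:=W^{1,p}((0,T);L^r(\varOmega))\cap L^p((0,T);W^{2,r}(\varOmega))$. By the previous step $\{u_n\}$ is bounded in $L^\infty((0,T);B^{2-2/p}_{r,p}(\varOmega))$, while $\{\partial_t u_n\}$ is bounded in $L^p((0,T);L^r(\varOmega))$ with $p>1$. Since the inequality $2-2/p-d/r>1$ is strict, $B^{2-2/p}_{r,p}(\varOmega)\hookrightarrow C^{1,\alpha}(\overline\varOmega)$ for some $\alpha>0$, hence $B^{2-2/p}_{r,p}(\varOmega)\hookrightarrow\hookrightarrow W^{1,\infty}(\varOmega)\hookrightarrow L^r(\varOmega)$ with the first embedding compact. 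Simon's compactness theorem (the $C([0,T];B)$ version, with $X=B^{2-2/p}_{r,p}(\varOmega)$, $B=W^{1,\infty}(\varOmega)$, $Y=L^r(\varOmega)$) then shows that $\{u_n\}$ is relatively compact in $C([0,T];W^{1,\infty}(\varOmega))\hookrightarrow L^\infty((0,T);W^{1,\infty}(\varOmega))$. Equivalently, one may argue by Arzel\`a--Ascoli directly: trading part of the trace regularity for H\"older-in-time regularity gives $\mathbf E_T\hookrightarrow C^{0,\gamma}([0,T];(X_0,X_1)_{\vartheta,p})$ for some $\vartheta<1-1/p$ close to $1-1/p$ and some $\gamma>0$, which supplies equicontinuity in time, while $(X_0,X_1)_{1-1/p,p}\hookrightarrow\hookrightarrow W^{1,\infty}(\varOmega)$ supplies pointwise precompactness. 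The main obstacle is the trace/mixed-derivative theorem $\mathbf E_+\hookrightarrow BUC(\R_+;B^{2-2/p}_{r,p}(\varOmega))$ with the sharp exponent; once it is granted, the rest is standard scalar Besov--Sobolev theory together with a routine compactness lemma, and the borderline nature of the hypothesis $2/p+d/r<1$ is visible in the two elliptic/temporal embeddings matching up exactly.
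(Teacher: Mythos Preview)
Your argument is correct and follows the standard route to this result: the trace/mixed-derivative theorem identifies the maximal-regularity space with $BUC$ into the real-interpolation space $B^{2-2/p}_{r,p}(\varOmega)$, and the hypothesis $2/p+d/r<1$ is exactly what makes $B^{2-2/p}_{r,p}(\varOmega)\hookrightarrow C^1(\overline\varOmega)$ (and compactly into $W^{1,\infty}(\varOmega)$ via $C^{1,\alpha}$). The compactness step via Simon's theorem (or the equivalent Arzel\`a--Ascoli argument using the extra H\"older-in-time regularity) is also sound.

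However, there is nothing to compare against here: the present paper does not give a proof of this lemma. It is stated in Section~\ref{Se:background} as background material and attributed to \cite[Lemma~3.1]{KuLL}; the authors simply quote the embeddings and the compactness assertion and then record the consequence \eqref{Sobolev-ineq}. So your write-up is not an alternative to the paper's proof but rather a self-contained justification of a result the paper chooses to import. If you want to align with the paper's presentation, the appropriate thing is to cite \cite{KuLL} (and, behind it, the references you already mention: Amann, Pr\"uss--Simonett, Lions--Magenes for the trace theorem; Simon for compactness) rather than to reproduce the argument in full.
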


A consequence of the embedding \eqref{Sobolev-emb2} is the following
\emph{space-time Sobolev inequality}: If $2/p + d/r < 1$ and
$v\in W^{1,p}((0,T);L^r(\varOmega))\cap L^p((0,T);W^{2,r}(\varOmega))
$ such that $v(0)=0,$ then
\begin{equation}
\label{Sobolev-ineq}
\|v\|_{L^\infty ((0,T);W^{1,\infty}(\varOmega) )}\leqslant c_{p,r} \big (\|v_t\|_{L^p ((0,T);L^r (\varOmega)  )}
+\|v\|_{L^p ((0,T);W^{2,r} (\varOmega)  )}\big );  
\end{equation}
see \cite[(3.1)]{KuLL}. 

We also recall Schaefer's fixed point theorem.

\begin{lemma}[Schaefer’s fixed point theorem; {\cite[Chapter 9.2,
    Theorem 4]{E}}] \label{Le:Schaefer} 
  Let $X$ be a Banach space and let $\M\colon X \to X$ be a continuous
  and compact map. If the set
\begin{equation}
\label{eq:Sch-fp}
  \{\varphi\in X: \varphi=\mu \M(\varphi) \text{ for some }
    \mu\in [0,1]\} 
\end{equation}
is bounded in $X$, then the map $\M$ has a fixed point.
\end{lemma}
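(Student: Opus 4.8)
\emph{Proof plan.} The plan is to derive Schaefer's theorem from Schauder's fixed point theorem by the classical radial retraction argument. First I would invoke the hypothesis: let $M$ be such that the set in~\eqref{eq:Sch-fp} is contained in $\{\varphi\in X:\|\varphi\|\leqslant M\}$, fix any $R>M$, and put $B:=\{\varphi\in X:\|\varphi\|\leqslant R\}$, a nonempty closed bounded convex subset of $X$. Next I would introduce the radial retraction $\rho\colon X\to B$ defined by $\rho(\psi):=\psi$ for $\|\psi\|\leqslant R$ and $\rho(\psi):=R\psi/\|\psi\|$ for $\|\psi\|>R$; it is continuous and restricts to the identity on $B$. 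Then the modified map $\widetilde\M:=\rho\circ\M$ sends $B$ into $B$, is continuous, and is compact: since $\overline{\M(B)}$ is compact and $\rho$ is continuous, $\rho(\M(B))\subseteq\rho(\overline{\M(B)})$ is relatively compact. Hence Schauder's fixed point theorem (in the form for continuous compact self-maps of a closed bounded convex set) yields a fixed point $\varphi=\widetilde\M(\varphi)=\rho(\M(\varphi))\in B$.

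The second step is a dichotomy on $\|\M(\varphi)\|$. If $\|\M(\varphi)\|\leqslant R$, then $\rho(\M(\varphi))=\M(\varphi)$, so $\varphi=\M(\varphi)$ and $\varphi$ is the desired fixed point of $\M$. If instead $\|\M(\varphi)\|>R$, then $\varphi=\rho(\M(\varphi))=\mu\,\M(\varphi)$ with $\mu:=R/\|\M(\varphi)\|\in(0,1)$, so $\varphi$ belongs to the set~\eqref{eq:Sch-fp} and therefore $\|\varphi\|\leqslant M<R$; but at the same time $\|\varphi\|=\|\rho(\M(\varphi))\|=R$, a contradiction. Thus the second alternative is impossible and $\M$ has a fixed point.

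The genuine content is entirely in Schauder's fixed point theorem itself; the retraction bookkeeping above is routine. Since the paper cites \cite{E} for the statement of Schaefer's theorem, I would likewise cite \cite{E} for Schauder and stop there. If a self-contained argument were required, the main obstacle would move one level down: one would have to prove Schauder from Brouwer's theorem by approximating the compact map uniformly on $B$ by maps with finite-dimensional range (Schauder projections onto the convex hull of a finite net of $\overline{\M(B)}$), applying Brouwer on the corresponding finite-dimensional simplex, and passing to a limit of approximate fixed points using the compactness of $\overline{\M(B)}$ — that finite-dimensional reduction and limit passage, not the radial-retraction step, is where the real work lies.
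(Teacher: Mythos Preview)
Your argument is correct and is the standard derivation of Schaefer's theorem from Schauder's theorem via radial retraction; indeed, this is essentially the proof given in the cited reference \cite{E}. However, the paper itself does not prove this lemma at all: it is stated in the background section purely as a quotation from \cite[Chapter 9.2, Theorem 4]{E}, with no accompanying proof, and is then applied as a black box in the existence argument of Section~\ref{Se:3}. So there is nothing to compare against --- you have supplied a (correct) proof where the authors simply cite one.
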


\section{A priori error analysis}\label{Se:3} 

As in \cite[\textsection 4]{KuLL}, for notational convenience, we
shall prove the a priori error estimates, i.e.,
Theorem~\ref{Theorem1}, for the case that $f$ depends only on
$\nabla u,$ but not explicitly on $u$. The general case results in
lower order terms that are nonessential for maximal regularity
properties and cause no additional difficulties but make the notation
more cumbersome.  In the error analysis, we shall rewrite the
differential equation $u_t=\nabla\cdot f(\nabla u)$ in the form
\begin{equation}
\label{eq:diff-eq2n}
u_t=A(\nabla u)u ,
\end{equation}
where the second order operator
$A(\nabla v)\colon W^{2,r}(\varOmega)\cap W^{1,r}_0(\varOmega)\to
L^r(\varOmega)$, for fixed $v\in W^{1,\infty}(\varOmega) $, is
defined by
\begin{equation}
\label{eq:notation-A}
A(\nabla v)w:=\sum_{\ell,m=1}^d \alpha_{\ell m}(\nabla v)w_{x_\ell x_m}, 
\end{equation}
where
\begin{equation}
\label{eq:notation-f_lm}
\alpha_{\ell m}(p):=[f_{\ell m}(p)+f_{m \ell}(p)]/2, \quad
f_{\ell m}(p):=\frac {\partial f_\ell}{\partial p_m}(p) \quad \forall p\in \R^d;
\end{equation}
cf., also, \cite[\textsection 4]{KuLL}.  The form \eqref{eq:diff-eq2n}
of the nonlinear parabolic equation will allow us to take advantage of
the maximal regularity property of dG methods for non-autonomous linear
parabolic equations (cf.\ Lemma~\ref{Le:maxreg-dG}) and to write the
derivation of the estimates in compact form; for instance, we shall
use the local Lipschitz condition for $\alpha_{\ell m}$ to estimate
differences of the form $\big [A(\nabla v_1)-A(\nabla v_2)\big ]w$ for
bounded $\nabla v_1$ and $\nabla v_2$.  More precisely: for
$v_1,v_2\in W^{1,\infty}(\varOmega)$ such that
\begin{equation} \label{eq:M-bound}
\|\nabla v_1\|_{L^{\infty}(\varOmega)}\leqslant M, \ \
  \|\nabla v_2\|_{L^{\infty}(\varOmega)}\leqslant M,
\end{equation} 
the local Lipschitz conditions for
$\alpha_{\ell m}\colon \R^d\to\R, \ell,m=1,\dotsc,d,$ (see
\eqref{eq:notation-A} and \eqref{eq:notation-f_lm}), lead to a local
Lipschitz condition for the operator $\nabla v\mapsto A(\nabla v),$
namely
\begin{equation}
\label{Lipschitz-A}
\big\|\big(A(\nabla v_1)- A(\nabla v_2) \big)w\big\|_{L^r(\varOmega)}
  \leqslant  L_M \| \nabla v_1- \nabla v_2 \|_{L^{\infty}(\varOmega)}
  \|w\|_{W^{2,r}(\varOmega)}.
\end{equation}

\subsection{An interpolant and its approximation properties}\label{SSe:3.1}
We shall use a standard interpolant
$\tilde u\in \V_k^{\text{d}} (q-1)$ of the solution $u$ such that
$\tilde u(t_n)=u(t_n), n=0,\dotsc,N,$ and $u-\tilde u$ is in each
subinterval $J_n$ orthogonal to polynomials of degree at most $q-2$
(with the second condition being void for $q=1$).  Thus, $\tilde u$ is
determined in $J_n$ by the conditions
\begin{equation}
\label{eq:tilde-u1}
\left\{
\begin{aligned}
&\tilde u(t_{n+1})=u(t_{n+1}),\\
&\int_{t_n}^{t_{n+1}} \big (u(t)-\tilde u(t)\big )t^j\, \d t=0,\quad j=0,\dotsc,q-2;
\end{aligned}
\right.
\end{equation}
cf., e.g., \cite[(12.9)]{T} and \cite[\textsection\textsection
3.1--3.3]{SS} for the case of Hilbert spaces.

Existence and uniqueness of $\tilde u$ as well as approximation
properties for the interpolant $\tilde u$ and its reconstruction
$\hat{\tilde u}$ are established in \cite{AL-linear} for general
Banach spaces $X.$ The following approximation properties
\begin{align}
\label{eq:approx-prop-desired1}
\|u-\tilde u\|_{L^p((0,T);X)} &\leqslant ck^q\|u^{(q)}\|_{L^p((0,T);X)},\\
\label{eq:approx-prop-desired1B-infty}
\|u-\tilde u\|_{L^\infty((0,T);X)} &\leqslant
  ck^{q-\frac1p}\|u^{(q)}\|_{L^p((0,T);X)}, \\
\label{eq:approx-prop-desired2}
\|u-\hat {\tilde u}\|_{L^p((0,T);X)}&\leqslant c k^{q}\|u^{(q)}\|_{L^p((0,T);X)},\\  
\label{eq:approx-prop-desired3}
\|u_t-\hat {\tilde u}_t\|_{L^p((0,T);X)}&\leqslant c k^{q}\|u^{(q+1)}\|_{L^p((0,T);X)},  
\end{align}
with $X=L^r(\varOmega)$, $X= W^{1,\infty}(\varOmega),$ or
$X= W^{2,r}(\varOmega),$ will play an important role in our analysis;
see \cite[(2.13), (A.8), (2.14), (2.15)]{AL-linear}.  Here, we used the
notation $u(t):=u(\cdot,t)$ and
$u^{(q)}(t):=\frac {\partial^q u}{\partial t^q}(\cdot,t).$

The norms $\|\cdot\|_{W^{2,r}(\varOmega)}$ and
$\|A(\nabla u)\cdot\|_{L^r(\varOmega)}$ are obviously equivalent on
$W^{2,r}(\varOmega)\cap W^{1,r}_0(\varOmega)$ uniformly with respect
to $u$; see also \eqref{eq:invertibility}.  Therefore,
\eqref{eq:approx-prop-desired1} for $X=W^{2,r}(\varOmega)$ leads to
the estimate
\begin{equation}
\label{eq:estim-rho}
\|A(\nabla u)(u-\tilde u)\|_{L^p((0,T);L^r(\varOmega))}
\leqslant ck^q\|u^{(q)}\|_{L^p((0,T);W^{2,r}(\varOmega))}.
\end{equation}

Our assumption $2/p+d/r<1$ implies that $r>d$, so that we may take
advantage of the Sobolev embedding
$W^{2,r}(\varOmega) \hookrightarrow W^{1,\infty}(\varOmega);$ see
\cite[Theorem 4.12, (2)]{AF-2003}.  Then,
\eqref{eq:approx-prop-desired1B-infty} for $X=W^{2,r}(\varOmega)$
yields also the estimate
\begin{equation}
\label{eq:estim-rho-W1}
\|u-\tilde u\|_{L^\infty((0,T);W^{1,\infty}(\varOmega))}
\leqslant ck^{q-1/p} \|u^{(q)}\|_{L^p((0,T);W^{2,r}(\varOmega))}. 
\end{equation}

We use the standard splitting of the error
\begin{equation}
  \label{eq:errorsplitting}
  e=u-U=(u-\tilde u)+(\tilde u-U)=\rho+\vartheta,
\end{equation}
where we thus have various error estimates for $\rho:=u-\tilde u$ and
it remains to bound $\vartheta:=\tilde u-U\in \V_k^{\text{d}} (q-1)$.

Since $\rho=u-\tilde u$ is orthogonal to $v_t$ in $J_n$ for any
$v\in \V_k^{\text{d}} (q-1)$ and vanishes at the nodes $t_m$,
integration by parts shows that it has the following crucial property
\begin{equation}
\label{eq:rho1}
\int_{J_n}   ( \rho_t ,v )  \, \d t
+ ( \rho_n^{+}-\rho_n, v_n^{+}) =0\quad \forall v \in \P(q-1);
\end{equation}
cf.\ \cite[(12.13)]{T}.

\subsection{Regularity assumptions} \label{SSe:regularity-assumptions}

Since we assume that \eqref{eq:diff-eq2n} has a solution that belongs to
$W^{q,p}\big ((0,T);W^{2,r}(\varOmega)\cap W^{1,r}_0(\varOmega)\big)$,
it is possible to choose a single constant $M$ such that the following
bounds hold for $u$ and $\rho=u-\tilde u$.  First, the main regularity
bound, 
\begin{equation}
  \label{eq:5}
  \|u\|_{W^{q,p}((0,T);W^{2,r}(\varOmega))}
  \leqslant {M},
\end{equation}
and, by the Sobolev embedding, 
\begin{equation}
  \label{eq:7a}
  \|u\|_{L^\infty((0,T);W^{1,\infty}(\varOmega))}
  \leqslant 
  C \|u\|_{W^{q,p}((0,T);W^{2,r}(\varOmega))}
  \leqslant M. 
\end{equation}
Then, by \eqref{eq:approx-prop-desired1} and \eqref{eq:approx-prop-desired1B-infty},
\begin{align}
  \label{eq:6b}
  \|\rho\|_{L^p((0,T);W^{1,\infty}(\varOmega))}
  &\leqslant
    C \|\rho\|_{L^p((0,T);W^{2,r}(\varOmega))}
    \leqslant  Mk^{q}, 
  \\
  \label{eq:6a}
  \|\rho\|_{L^\infty((0,T);W^{1,\infty}(\varOmega))}
  &\leqslant C\|\rho\|_{L^\infty((0,T);W^{2,r}(\varOmega))}
    \leqslant Mk^{q-1/p}.
\end{align}
We also have
\begin{equation}
  \label{eq:8}
  \|\tilde{u}\|_{L^\infty((0,T);W^{2,r}(\varOmega))}
  \leqslant 
  C \|u\|_{L^\infty((0,T);W^{2,r}(\varOmega))}
  \leqslant 
  C \|u\|_{W^{q,p}((0,T);W^{2,r}(\varOmega))}
  \leqslant M. 
\end{equation}

\subsection{Derivation of the error equation} \label{SSe:derivation-err-eq}

Subtracting \eqref{dg}, with $f$ depending only on $\nabla U$, from
\begin{equation}
\label{dg-u1}
\int_{J_n}   \big( ( u_t ,v )  + (f(\nabla u) ,\nabla v ) \big) \, \d t  
+ ( u_n^{+}-u_n, v_n^{+})= 0 \quad \forall v \in \P(q-1),
\end{equation}
where $u_n^{+}-u_n=0$, we get the Galerkin orthogonality for the error
$e:=u-U$,
\begin{equation}
  \label{eq:dg-orthogonality}
  \int_{J_n}   \big[ ( e_t ,v )
  + (f(\nabla u)- f(\nabla U),\nabla v ) \big] \, \d t  
  + ( e_n^{+}-e_n, v_n^{+})=0  \quad \forall v \in \P(q-1). 
\end{equation}
We linearize  \eqref{eq:dg-orthogonality} about $\nabla u$:
\begin{equation} \label{eq:linearization}
  \nabla\cdot f(\nabla u)-\nabla\cdot f(\nabla U)
  =A(\nabla u)(u-U) 
  + \big(A(\nabla u)-A(\nabla U)\big)U 
  =A(\nabla u)e +G
\end{equation}
with remainder (recall that $\vartheta=\tilde u-U$)
\begin{equation}
\label{eq:G1} 
G:=  \big(A(\nabla u)-A(\nabla U)\big)U
=\big [A(\nabla u)-A(\nabla (\tilde u-\vartheta))\big ](\tilde u-\vartheta)
\end{equation} 
to get the linearized error equation
\begin{equation}
  \label{eq:dg-orthogonality2}
\int_{J_n}   \big[ ( e_t ,v )  - (A(\nabla u)e, v ) \big] \, \d t  
+ ( e_n^{+}-e_n, v_n^{+})
=\int_{J_n}   (G,v ) \, \d t  
\quad \forall v \in \P(q-1). 
\end{equation}
Thus, in view of \eqref{eq:errorsplitting} and \eqref{eq:rho1}, we
obtain the following equation for $\vartheta\in \V_k^{\text{d}}(q-1)$:
\begin{equation}
  \label{eq:error.dg-thetaequation4nx}
  \begin{aligned}
    & \int_{J_n}   \big[ ( \vartheta_t ,v )
    - (A(\nabla u)\vartheta , v ) \big] \, \d t
    + ( \vartheta_n^{+}-\vartheta_n, v_n^{+})\\
    & \qquad = \int_{J_n} \big [
    \big(A(\nabla u)\rho , v \big) +
    \big((A(\nabla u)-A(\nabla\tilde{u}-\nabla\vartheta))
    ( \tilde{u}-\vartheta),v \big)\big ]\, \d t  
  \end{aligned}
\end{equation}
for $v \in \P(q-1),$ for $n=0,\dotsc,N-1$, with vanishing starting
value $\vartheta(0)=0$.  This is a nonlinear equation for
$\vartheta$. In the next subsection we show the existence of a
solution with
$\| \vartheta \|_{L^\infty((0,T);W^{1,\infty}(\varOmega))}\leqslant \nu$
for arbitrarily small $\nu$.

\subsection{Existence}\label{SSe:existenceN}

Our goal here is to prove existence of a solution $\vartheta$
of \eqref{eq:error.dg-thetaequation4nx} in the space
$X=L^{\infty}((0,T),W^{1,\infty}(\varOmega))$.  We will truncate the
nonlinearity and then apply Schaefer's fixed point theorem. 

\subsubsection{A truncated equation} \label{sss.step0}

We introduce a truncation of the nonlinear operator
$\varphi\mapsto A(\nabla \tilde{u} -\nabla{\varphi})$ that appears in
the error equation \eqref{eq:error.dg-thetaequation4nx}.  We replace
it by
$\varphi\mapsto A(\nabla \tilde{u} - \beta(\varphi)\nabla{\varphi})$,
where the functional $\beta\colon W^{1,\infty}(\varOmega)\to \R$ is
defined by
\begin{equation}
  \label{eq:definitionbeta1N}
  \beta(\varphi) 
  =\min\Big(
  \frac{{\varepsilon}}{\|\varphi\|_{W^{1,\infty}(\varOmega)}},1
  \Big)\quad \text{with $\varepsilon\in(0,1]$}.
\end{equation}
Then, we have 
\begin{align}
  \label{eq:definitionbeta2cN}
  &\|\beta(\varphi_1)\nabla\varphi_1-\beta(\varphi_2)\nabla\varphi_2\|_{L^{\infty}(\varOmega)}
    \leqslant 2 \|\nabla(\varphi_1-\varphi_2)\|_{L^{\infty}(\varOmega)}, \\ 
  \label{eq:definitionbeta2aN}
  &\|\beta(\varphi)\nabla\varphi\|_{L^{\infty}(\varOmega)}
    \leqslant {\varepsilon}, \\ 
  \label{eq:definitionbeta2bN}
  &\beta(\varphi)=1, \quad \text{if
    $\|\varphi\|_{W^{1,\infty}(\varOmega)}
    \leqslant{\varepsilon}$}.
\end{align}
The proof of \eqref{eq:definitionbeta2cN} is obtained by noting that 
\begin{equation*}
  \beta(\varphi)\nabla\varphi
  =\left\{
  \begin{alignedat}{2}
  &\nabla\varphi, &&\text{if }\,  \|\varphi\|_{W^{1,\infty}(\varOmega)}\leqslant\varepsilon,\\
   & \dfrac{\varepsilon}{\|\varphi\|_{W^{1,\infty}(\varOmega)}}\nabla\varphi,\quad 
    &&\text{if }\, \|\varphi\|_{W^{1,\infty}(\varOmega)}\geqslant\varepsilon,
  \end{alignedat}
  \right.
\end{equation*}
and considering the three cases whether $\varphi_1$, $\varphi_2$ are
both small, both big, and one is big and one is small relative to
$\varepsilon$.

We thus introduce the truncated error equation to find
$\vartheta\in\V_k^{\text{d}} (q-1),$ with $\vartheta(0)=0,$ such that
\begin{equation}
  \label{eq:Appendix.dg-thetaequation4nnNyy}
  \begin{aligned}
    &
    \int_{J_n}   \big[
    ( \widehat{\vartheta}_t ,v )
    - (A(\nabla u)\vartheta , v )
    \big] \, \d t
    \\ & \qquad
    =
    \int_{J_n} \big[
    \big( A(\nabla u)\rho , v \big)
    +
    \big(
    (A(\nabla u)
    -
    A(\nabla \tilde u-\beta(\vartheta)\nabla\vartheta)
    )
    (\tilde{u}-\vartheta),v \big)
    \big] \, \d t  
  \end{aligned}
\end{equation}
for $v \in \P(q-1),$ for $n=0,\dotsc,N-1$.  Notice, that we have used
the same notation $\vartheta$ for the solutions of
\eqref{eq:Appendix.dg-thetaequation4nnNyy} and
\eqref{eq:error.dg-thetaequation4nx}.

Before proceeding, we derive a useful inequality for the nonlinear
term. By \eqref{eq:7a} we have
\begin{align}
  \label{eq:uniform-M-bounds1}
  \|\nabla u \|_{L^{\infty}((0,T);L^{\infty}(\varOmega))}
  \leqslant M
\end{align}
and, by \eqref{eq:8} and \eqref{eq:definitionbeta2aN},
\begin{align}\label{eq:uniform-M-bounds2}
  \begin{aligned}
    \|\nabla \tilde{u}-\beta(\varphi)\nabla\varphi
    \|_{L^{\infty}((0,T);L^{\infty}(\varOmega))}
    & \leqslant
    \|\nabla \tilde{u}\|_{L^{\infty}((0,T);L^{\infty}(\varOmega))}
    +\|\beta(\varphi)\nabla\varphi \|_{L^{\infty}((0,T);L^{\infty}(\varOmega))}
    \\ &
    \leqslant M+\varepsilon 
    \leqslant M+1 , 
  \end{aligned}
\end{align}
so that the local Lipschitz bound in \eqref{Lipschitz-A} with constant
$L=L_{M+1}$, \eqref{eq:6a}, and \eqref{eq:definitionbeta2aN} give
\begin{equation}
 \label{eq:uniform-lipschitz4xx}
  \begin{aligned}
  & \|(A(\nabla u)-A(\nabla \tilde{u}-\beta(\varphi)\nabla\varphi))
  v\|_{L^p((0,T);L^{r}(\varOmega))}
  \\ & \quad \leqslant 
  L \big(
  \| \nabla(u-\tilde{u}) \|_{L^{\infty}((0,T);L^{\infty}(\varOmega))}
  + \| \beta(\varphi)\nabla\varphi)
  \|_{L^{\infty}((0,T);L^{\infty}(\varOmega))}
  \big) 
  \| v \|_{L^p((0,T);W^{2,r}(\varOmega))}
  \\ & \quad \leqslant
  L\big( Mk^{q-1/p}+\varepsilon\big) 
  \| v \|_{L^p((0,T);W^{2,r}(\varOmega))}.
  \end{aligned} 
\end{equation}

\subsubsection{A fixed point equation} \label{sss.step1}

To write equation \eqref{eq:Appendix.dg-thetaequation4nnNyy} in a fixed-point
form, with a given input $\varphi \in X,$ we consider the linear
equation to find $\vartheta\in\V_k^{\text{d}} (q-1),$ with
$\vartheta(0)=0,$ such that
\begin{equation}
  \label{eq:Appendix.dg-thetaequation4nnN}
  \begin{aligned}
    &
    \int_{J_n}   \big[
    ( \widehat{\vartheta}_t ,v )
    - (A(\nabla u)\vartheta , v )
    \big] \, \d t
    \\ & \qquad
    =
    \int_{J_n} \big[
    \big( A(\nabla u)\rho , v \big)
    +
    \big(
    (A(\nabla u)
    -
    A(\nabla \tilde u-\beta(\varphi)\nabla\varphi)
    )
    (\tilde{u}-\vartheta),v \big)
    \big] \, \d t  
  \end{aligned}
\end{equation}
for $v \in \P(q-1),$ for $n=0,\dotsc,N-1$. Notice, that we have used
the same notation $\vartheta$ for the solutions of
\eqref{eq:Appendix.dg-thetaequation4nnN} and
\eqref{eq:error.dg-thetaequation4nx}.

Our next goal is to show that \eqref{eq:Appendix.dg-thetaequation4nnN}
defines a nonlinear map $\mathcal{M}\colon X\to X$,
$\varphi\mapsto\vartheta$, with
$X=L^{\infty}((0,T);W^{1,\infty}(\varOmega)) $.  It suffices to show
that, for each $\varphi\in X,$
\eqref{eq:Appendix.dg-thetaequation4nnN} possesses a unique solution
$\vartheta\in \V_k^{\text{d}} (q-1)$.

We apply the Banach fixed-point theorem to the operator
$\mathcal{S}\colon \psi\mapsto \vartheta $ on $\V_k^{\text{d}} (q-1)$,
where $\vartheta$ solves $\vartheta(0)=0$ and
\begin{equation}
  \label{eq:Appendix.dg-thetaequation4nnNN}
  \begin{aligned}
    &
    \int_{J_n}   \big[
    ( \widehat{\vartheta}_t ,v )
    - (A(\nabla u)\vartheta , v )
    \big] \, \d t
    \\ & \qquad
    =
    \int_{J_n} \big[
    \big( A(\nabla u)\rho , v \big)
    +
    \big(
    (A(\nabla u)
    -
    A(\nabla \tilde u-\beta(\varphi)\nabla\varphi)
    )
    (\tilde{u}-\psi),v \big)
    \big] \, \d t  
  \end{aligned}
\end{equation}
for $v \in \P(q-1),$ for $n=0,\dotsc,N-1$. This is a linear dG
equation and it clearly has a unique solution
$\vartheta\in\V_k^{\text{d}} (q-1)$ for each
$\psi\in\V_k^{\text{d}} (q-1)$; thus $\mathcal{S}$ is well defined.

We want to show that $\mathcal{S}$ is a contraction on
$\V_k^{\text{d}} (q-1)$.  For
$\psi_1,\psi_2\in \V_k^{\text{d}} (q-1),$ by subtracting the
corresponding equations, we see that the difference 
$\zeta=\mathcal{S}(\psi_1)-\mathcal{S}(\psi_2)$ satisfies
\begin{equation}%
  \label{eq:dg-thetaequation6N}
  \begin{aligned}
   & \int_{J_n}   \big[ ( \widehat{\zeta}_t ,v )
    - (A(\nabla u)\zeta , v )
    \big] \, \d t \\ 
    & \qquad
    = -\int_{J_n} \big((A(\nabla u)-A(\nabla \tilde{u}-\beta(\varphi)\nabla\varphi))
   (\psi_1-\psi_2),v \big)  \, \d t  
  \end{aligned}
\end{equation}
for $v \in \P(q-1),$ for $n=0,\dotsc,N-1$.

The maximal regularity property \eqref{eq:dG-continuous-mr2} of the dG
method applied to \eqref{eq:dg-thetaequation6N} yields
\begin{equation}
\label{eq:dG-Banach}
\|\zeta\|_{L^p((0,T);W^{2,r}(\varOmega))}
\leqslant C_{p,q,T}  \|(A(\nabla u)-A(\nabla \tilde{u}-\beta(\varphi)\nabla\varphi))
   (\psi_1-\psi_2)\|_{L^p((0,T);L^{r}(\varOmega))}.
\end{equation}
Here \eqref{eq:uniform-lipschitz4xx} gives 
\begin{equation*}
  \begin{aligned}
  & \|(A(\nabla u)-A(\nabla \tilde{u}-\beta(\varphi)\nabla\varphi))
  (\psi_1-\psi_2)\|_{L^p((0,T);L^{r}(\varOmega))}
  \\ & \quad
  \leqslant
  L\big( Mk^{q-1/p}+\varepsilon\big) 
  \| \psi_1-\psi_2 \|_{L^p((0,T);W^{2,r}(\varOmega))}.
  \end{aligned}
\end{equation*}
Hence,
\begin{equation}
\label{eq:dG-Banach2}
\|\mathcal{S}(\psi_1)-\mathcal{S}(\psi_2)\|_{L^p((0,T);W^{2,r}(\varOmega))}
\leqslant C_{p,q,T}  
  L \big( Mk^{q-1/p}+\varepsilon\big) 
  \| \psi_1-\psi_2 \|_{L^p((0,T);W^{2,r}(\varOmega))}.
\end{equation}
If $C_{p,q,T} L ( Mk^{q-1/p}+\varepsilon) \leqslant 1/2$, then  we
have a contraction. Thus, for every $\varphi\in X$ we have a unique
fixed point $\vartheta=\mathcal{S}(\vartheta)$.  This defines the
nonlinear mapping $\mathcal{M}\colon X\to X$ with
$\mathcal{M}(\varphi):=\vartheta$.

\subsubsection{Application of Schaefer's fixed point
  theorem} \label{sss.step2}

We check the assumptions of Theorem~\ref{Le:Schaefer} for the mapping
$\mathcal{M}\colon X\to X$.

\textit{1.  The set
  $\{\varphi\in X: \varphi=\mu \M(\varphi) \text{ for some } \mu\in
  [0,1]\} $ is bounded in $X$. }
Suppose thus that $\varphi \in X $
satisfies $\varphi=\mu\mathcal{M}(\varphi)$ for some $\mu\in[0,1]$.
This means that $\varphi=\mu\vartheta$, where
$\vartheta\in \V_k^{\text{d}} (q-1)$ is the solution of
\eqref{eq:Appendix.dg-thetaequation4nnN} with input
$\varphi=\mu\vartheta$, that is, $\vartheta(0)=0$ and
\begin{equation}
  \label{eq:dg-thetaequation4b1}
  \begin{aligned}
    &
    \int_{J_n}   \big[
    ( \widehat{\vartheta}_t ,v )
    -( A(\nabla u)\vartheta ,v)
    \big] \, \d t
    \\ & \qquad
    =
    \int_{J_n} \big[
    ( A(\nabla u)\rho , v )
    +
    \big(
    (A(\nabla u)
    -
    A(\nabla \tilde u-\beta(\mu\vartheta)\nabla (\mu\vartheta)) 
    )
    (\tilde{u}-\vartheta),v \big)
    \big] \, \d t  
  \end{aligned}
\end{equation}
for $v \in \P(q-1)$, $n=0,\dotsc,N-1$.  Maximal regularity (see
\eqref{eq:dG-continuous-mr2}) gives
\begin{equation}
\label{eq:derror-equation3aNx}
\begin{aligned}
  &
  \|\widehat\vartheta_t\|_{L^p((0,T);L^r(\varOmega))}
  +
  \|\widehat\vartheta\|_{L^p((0,T);W^{2,r}(\varOmega))}
  + \|\vartheta\|_{L^p((0,T);W^{2,r}(\varOmega))}
  \\ &\quad 
  \leqslant
  C_{p,q,T}
  \big(
  \|A(\nabla u)\rho\|_{L^p((0,T);L^r(\varOmega))}
  \\ &\qquad 
  +
  \|
  (A(\nabla u)-A(\nabla \tilde u-\beta(\mu\vartheta)\nabla(\mu\vartheta)))
  (\tilde{u}-\vartheta)
  \|_{L^p((0,T);L^r(\varOmega))}\big),
\end{aligned}
\end{equation}
where, in view of \eqref{eq:6b},
\begin{equation}
  \label{eq:9Nx}
  \|A(\nabla u)\rho\|_{L^p((0,T);L^r(\varOmega))}\big ) 
  \leqslant Mk^q 
\end{equation}
and, in view of \eqref{eq:uniform-lipschitz4xx} and \eqref{eq:6a},
\eqref{eq:8}, \eqref{eq:definitionbeta2aN},
\begin{equation}
  \label{eq:10Nx}
  \begin{aligned}
    &
    \|
    (A(\nabla u)-A(\nabla \tilde u-\beta(\mu\vartheta)\nabla(\mu\vartheta)))
    (\tilde{u}-\vartheta)
    \|_{L^p((0,T);L^r(\varOmega))}
    \\ & \quad 
    \leqslant 
    L
    \big(
    Mk^{q-1/p}+\varepsilon
    \big)
    \big(
    \| \tilde{u} \|_{L^p((0,T);W^{2,r}(\varOmega))}  
    +
    \| \vartheta \|_{L^p((0,T);W^{2,r}(\varOmega))}  
    \big)
    \\ & \quad 
    \leqslant 
    L
    \big(
    Mk^{q-1/p}+\varepsilon
    \big)
    \big(M+\| \vartheta \|_{L^p((0,T);W^{2,r}(\varOmega))} \big) .
  \end{aligned}
\end{equation}
Hence, 
\begin{equation}
\label{eq:derror-equation3aN2x}
\begin{aligned}
  &
  \|\widehat\vartheta_t\|_{L^p((0,T);L^r(\varOmega))}
  +
  \|\widehat\vartheta\|_{L^p((0,T);W^{2,r}(\varOmega))}
  + \|\vartheta\|_{L^p((0,T);W^{2,r}(\varOmega))}
  \\ &\quad 
  \leqslant
  C_{p,q,T}
  \big(
  Mk^q 
  +
  LM
  \big(
  M k^{q-1/p}+\varepsilon
  \big)
  \big)
  +
  C_{p,q,T} L
  \big(
  M k^{q-1/p}+\varepsilon
  \big)
  \| \vartheta \|_{L^p((0,T);W^{2,r}(\varOmega))}  .
 \end{aligned}
\end{equation}
If $C_{p,q,T} L ( Mk^{q-1/p}+\varepsilon)\leqslant 1/2$ (the
same smallness condition as before), then the last term can be hidden
in the left-hand side to get
\begin{equation}
\label{eq:derror-equation3aN3x}
\begin{aligned}
  &
  \|\widehat\vartheta_t\|_{L^p((0,T);L^r(\varOmega))}
  +
  \|\widehat\vartheta\|_{L^p((0,T);W^{2,r}(\varOmega))}
  + \|\vartheta\|_{L^p((0,T);W^{2,r}(\varOmega))}
  \\ & \qquad
  \leqslant
  C 
  \big(
  Mk^q 
  +
  LM
  \big(
  M k^{q-1/p}+\varepsilon
  \big)
  \big).  
\end{aligned}
\end{equation}
Hence, by the space-time Sobolev inequality
\eqref{Sobolev-ineq}, we also have
\begin{equation}
\label{Sobolev-ineq2x}
\begin{aligned}
  &\| \widehat{\vartheta} \|_{L^\infty ((0,T);W^{1,\infty}(\varOmega) )}
  \leqslant c_{p,r} \big (\|\widehat{\vartheta}_t\|_{L^p ((0,T);L^r (\varOmega)  )}
  +\|\widehat{\vartheta} \|_{L^p ((0,T);W^{2,r} (\varOmega)  )}\big )  
  \\ & \qquad
  \leqslant
  C 
  \big(
  Mk^q 
  +
  LM
  \big(
  M k^{q-1/p}+\varepsilon
  \big)
  \big). 
\end{aligned}
\end{equation}
Since $\vartheta$ is a (discontinuous) Lagrange interpolant of
$\widehat{\vartheta}$, it follows that
\begin{equation}
\label{Sobolev-ineq2x2}
\| \vartheta \|_{L^\infty ((0,T);W^{1,\infty}(\varOmega) )}
\leqslant
C Mk^q+C LM(Mk^{q-1/p}+\varepsilon) .  
\end{equation}
Note that $C$ does not depend on $\mu$, $k$ and $\varepsilon$.  This
shows that the set
$\{\varphi\in X: \varphi=\mu \M(\varphi) \text{ for some } \mu\in
[0,1]\} $ is bounded in $X$.
 
\textit{2.  $\mathcal{M}\colon X\to X$ is compact.}  The previous
calculation with $\mu=1 $ shows that
$\varphi\mapsto \widehat{\vartheta}$ maps
$L^\infty((0,T);W^{1,\infty}(\varOmega)) $ into
$W^{1,p}((0,T);L^r(\varOmega))\cap L^p((0,T);W^{2,r}(\varOmega))$,
which is compactly embedded in
$L^\infty((0,T);W^{1,\infty}(\varOmega))$; see \eqref{Sobolev-emb2}.
Since $\vartheta$ is a (discontinuous) Lagrange interpolant of
$\widehat{\vartheta}$, it follows that $\mathcal{M}$ maps
$X=L^{\infty}((0,T),W^{1,\infty}(\varOmega))$ compactly into itself.

\textit{3.  $\mathcal{M}\colon X\to X$ is continuous.}
The function $\vartheta=\mathcal{M}(\varphi)$ is the unique solution
of \eqref{eq:Appendix.dg-thetaequation4nnN} and therefore the
difference
$\zeta=\vartheta_1-\vartheta_2=\mathcal{M}(\varphi_1)-\mathcal{M}(\varphi_2)$
satisfies $\zeta(0)=0$ and
\begin{equation}
  \label{eq:dg-thetaequation6N2}
    \int_{J_n}   \big[ ( \widehat{\zeta}_t ,v )
    - (A(\nabla u)\zeta , v )
    \big] \, \d t
    = \int_{J_n} ( G,v)  \, \d t  
\end{equation}
for $v \in \P(q-1),$ for $n=0,\dotsc,N-1$, where
\begin{equation}
  \label{eq:dg-thetaequation6N2b}
  \begin{aligned}
    G
    ={}&
    (A(\nabla u) 
    -
    A(\nabla \tilde{u}-\beta(\varphi_1)\nabla\varphi_1))
    (\tilde{u}-\vartheta_1)\\ 
&   -
    (
    A(\nabla u) 
    -
    A(\nabla \tilde{u}-\beta(\varphi_2)\nabla\varphi_2)
    )
    (\tilde{u}-\vartheta_2) 
    \\  =&
    -(
    A(\nabla \tilde{u}-\beta(\varphi_1)\nabla\varphi_1)
    -
    A(\nabla \tilde{u}-\beta(\varphi_2)\nabla\varphi_2)
    )
    (\tilde{u}-\vartheta_1) 
    \\ & 
    -
    (
    A(\nabla u) 
    -
    A(\nabla \tilde{u}-\beta(\varphi_2)\nabla\varphi_2)
    )
    (\vartheta_1-\vartheta_2) .  
  \end{aligned}
\end{equation}
Maximal regularity for dG (see \eqref{eq:dG-continuous-mr2}) gives
\begin{equation*}
\begin{aligned}
  &\|\widehat\zeta_t\|_{L^p((0,T);L^r(\varOmega))}
  +\|\widehat\zeta\|_{L^p((0,T);W^{2,r}(\varOmega))}
  + \|\zeta\|_{L^p((0,T);W^{2,r}(\varOmega))}
  \leqslant
  C_{p,q,T} 
  \|
  G
  \|_{L^p((0,T);L^r(\varOmega))}
  \\ &\quad
  \leqslant
  C_{p,q,T} 
  \|
  (
    A(\nabla \tilde{u}-\beta(\varphi_1)\nabla\varphi_1)
    -
    A(\nabla \tilde{u}-\beta(\varphi_2)\nabla\varphi_2)
    )
    (\tilde{u}-\vartheta_1) ) 
  \|_{L^p((0,T);L^r(\varOmega))}
  \\ & \qquad 
  +
  \|
  (
    A(\nabla u) 
    -
    A(\nabla \tilde{u}-\beta(\varphi_2)\nabla\varphi_2)
    )
    (\vartheta_1-\vartheta_2) 
  \|_{L^p((0,T);L^r(\varOmega))}
  \\ & \quad \leqslant 
  C_{p,q,T} 
  L
  \|
  \beta(\varphi_1)\nabla\varphi_1 
  - \beta(\varphi_2)\nabla\varphi_2 
  \|_{L^{\infty}((0,T);L^{\infty}(\varOmega))}
  \big(
  \| \tilde{u}\|_{L^p((0,T);W^{2,r}(\varOmega))}
  +
  \| \vartheta_1 \|_{L^p((0,T);W^{2,r}(\varOmega))}
  \big)
  \\ & \qquad 
  +
  C_{p,q,T} 
  L
  (
  \|
  u-\tilde{u}
  \|_{L^{\infty}((0,T);W^{1,\infty}(\varOmega))}
  +
  \|\beta(\varphi_2)\nabla\varphi_2 
  \|_{L^{\infty}((0,T);L^{\infty}(\varOmega))}
  )
  \| \zeta \|_{L^p((0,T);W^{2,r}(\varOmega))}
  \\ & \quad \leqslant 
  C C_{p,q,T} 
  L
  \|
  \nabla(\varphi_1-\varphi_2 )
  \|_{L^{\infty}((0,T);L^{\infty}(\varOmega))}
  +
  C_{p,q,T} 
  L 
  ( M k^{q-1/p} +  \varepsilon ) 
  \| \zeta \|_{L^p((0,T);W^{2,r}(\varOmega))} .
\end{aligned}
\end{equation*}
The constant $C$ depends on $M$ and $L$ but not on $k$ and
$\varepsilon$. We used \eqref{eq:uniform-lipschitz4xx},
\eqref{eq:definitionbeta2cN}, \eqref{eq:definitionbeta2aN}, and that
$ \| \tilde{u}\|_{L^p((0,T);W^{2,r}(\varOmega))}+\| \vartheta_1
\|_{L^p((0,T);W^{2,r}(\varOmega))} \leqslant C$ by \eqref{eq:8} and
\eqref{eq:derror-equation3aN3x}. If
$ C_{p,q,T} L ( Mk^{q-1/p} + \varepsilon ) \leqslant 1/2 $ (the same
smallness condition as before), then we can hide the last term in the
left-hand side and get
\begin{equation}
\label{eq:derror-equation3aN3a}
\begin{aligned}
  &\|\widehat\zeta_t\|_{L^p((0,T);L^r(\varOmega))}
  +\|\widehat\zeta\|_{L^p((0,T);W^{2,r}(\varOmega))}
  + \frac12\|\zeta\|_{L^p((0,T);W^{2,r}(\varOmega))}\\
  &\quad \leqslant 
  C
  C_{p,q,T} 
  L 
  \|
  \nabla(\varphi_1-\varphi_2 )
  \|_{L^{\infty}((0,T);L^{\infty}(\varOmega))}.
\end{aligned}
\end{equation}
By using the space-time Sobolev inequality \eqref{Sobolev-ineq} we
conclude
\begin{equation}
  \label{eq:lipschitzcontinuity}
  \|
  \mathcal{M}(\varphi_1)
  -
  \mathcal{M}(\varphi_1)
  \|_{L^{\infty}((0,T);W^{1,\infty}(\varOmega))}
  \leqslant 
  C
  \|
  \varphi_1-\varphi_2 
  \|_{L^{\infty}((0,T);W^{1,\infty}(\varOmega))}.
\end{equation}
Hence, $\mathcal{M}\colon X\to X$ is (globally Lipschitz) continuous.

\textit{4. Conclusion of the fixed point theorem.}  We have verified
that $\mathcal{M}$ satisfies the assumptions of Schaefer's fixed point
theorem, Lemma~\ref{Le:Schaefer}. We therefore obtain a fixed point
$\vartheta = \mathcal{M}(\vartheta)\in X$.  It is a solution of
\eqref{eq:Appendix.dg-thetaequation4nnNyy} and satisfies the bounds in
\eqref{eq:derror-equation3aN3x} and \eqref{Sobolev-ineq2x2}.

\subsection{Convergence}
\label{subsec:convergence}

So far we have shown the existence of a solution $\vartheta$ to the
truncated error equation \eqref{eq:Appendix.dg-thetaequation4nnNyy},
which is small in the sense that
$\|\vartheta\|_{L^p((0,T);W^{2,r}(\varOmega))} \leqslant
Ck^q+C\varepsilon$. It remains to show an error bound that does not
depend on $\varepsilon$ and to show that $\vartheta$ is so small that
the truncation does not take effect.

We repeat the calculations in the first step of
Subsection~\ref{sss.step2} but with $\mu=1$ and without relying on
\eqref{eq:definitionbeta2aN}.  Instead, we recall from
\eqref{Sobolev-ineq2x2} that
\begin{equation}
  \label{Sobolev-ineq2b1y}
  \| \vartheta \|_{L^\infty ((0,T);W^{1,\infty}(\varOmega) )}
  \leqslant
  C Mk^q+C LM(Mk^{q-1/p}+\varepsilon) 
  \leqslant \nu, 
\end{equation}
where $\nu$ can be made arbitrarily small.

The maximal regularity Lemma~\ref{Le:maxreg-dG} applied to the error
equation \eqref{eq:Appendix.dg-thetaequation4nnN} gives
\begin{equation}
\label{eq:derror-equation3aNx2M}
\begin{aligned}
  &
  \| \partial_k\widehat\vartheta\|_{\ell^p((0,T);L^r(\varOmega))}
  +
\|\widehat\vartheta_t\|_{L^p((0,T);L^r(\varOmega))}
  +
  \|\widehat\vartheta\|_{L^p((0,T);W^{2,r}(\varOmega))}
  + \|\vartheta\|_{L^p((0,T);W^{2,r}(\varOmega))}
  \\ &\qquad 
  \leqslant
  C_{p,r,q}
  \big(
  \|A(\nabla u)\rho\|_{L^p((0,T);L^r(\varOmega))}
  \\ & \qquad \quad
  +
  \|
  (A(\nabla u)-A(\nabla \tilde u-\nabla\vartheta))
  (\tilde{u}-\vartheta)
  \|_{L^p((0,T);L^r(\varOmega))}\big),
\end{aligned}
\end{equation}
where, in view of \eqref{eq:6b},
\begin{equation}
  \label{eq:9Nx2}
  \|A(\nabla u)\rho\|_{L^p((0,T);L^r(\varOmega))}\big ) 
  \leqslant
  Mk^q
\end{equation}
and, by a modification of \eqref{eq:uniform-lipschitz4xx} and
\eqref{eq:6b}, \eqref{eq:6a}, \eqref{Sobolev-ineq2b1y},
\begin{equation}
  \label{eq:10Nx2M}
  \begin{aligned}
    &
    \|
    (A(\nabla u)-A(\nabla \tilde u-\nabla\vartheta))
    (\tilde{u}-\vartheta)
    \|_{L^p((0,T);L^r(\varOmega))}
    \\ & \qquad 
    \leqslant 
    L
    \big(
    \| \nabla(u-\tilde{u}) \|_{L^{p}((0,T);L^{\infty}(\varOmega))}
    +
    \| \nabla\vartheta \|_{L^{p}((0,T);L^{\infty}(\varOmega))}
    \big)
    \| \tilde{u} \|_{L^\infty((0,T);W^{2,r}(\varOmega))} 
    \\ & \qquad \quad
    +
    L
    \big(
    \| \nabla(u-\tilde{u}) \|_{L^{\infty}((0,T);L^{\infty}(\varOmega))}
    +
    \| \nabla\vartheta \|_{L^{\infty}((0,T);L^{\infty}(\varOmega))}
    \big) 
    \| \vartheta \|_{L^p((0,T);W^{2,r}(\varOmega))}  
    \\ & \qquad 
    \leqslant 
    L M^2 k^{q}
    + L M \| \vartheta \|_{L^{p}((0,T);W^{1,\infty}(\varOmega))}
    + L
    \big(
    M k^{q-1/p}+\nu
    \big)
    \| \vartheta \|_{L^p((0,T);W^{2,r}(\varOmega))}  .
  \end{aligned}
\end{equation}
Hence, 
\begin{equation}
\label{eq:derror-equation3aN2x2}
\begin{aligned}
  &
  \|\widehat\vartheta_t\|_{L^p((0,T);L^r(\varOmega))}
  +
  \|\widehat\vartheta\|_{L^p((0,T);W^{2,r}(\varOmega))}
  + \|\vartheta\|_{L^p((0,T);W^{2,r}(\varOmega))}
  \\ & \qquad 
  \leqslant
  C_{p,q,T} \big(
  M  + L M^2\big) k^{q}
  +
  C_{p,q,T} L M \| \vartheta \|_{L^{p}((0,T);W^{1,\infty}(\varOmega))}
  \\ & \qquad \quad
  +
  C_{p,q,T} L
  \big(
  M k^{q-1/p}+\nu
  \big)
  \| \vartheta \|_{L^p((0,T);W^{2,r}(\varOmega))}  .
\end{aligned}
\end{equation}
Using the bound (an easy consequence of the embedding of
${W^{2,r}(\varOmega)}$ into ${W^{1,\infty}(\varOmega)}$ for $r>d;$ see
\cite[Lemma 9.1]{ALL-MC})
\begin{equation}
   \label{eq:gagliardo-nirenberg}
   \|v\|_{W^{1,\infty}(\varOmega)}
   \leqslant
   \gamma \| v\|_{W^{2,r}(\varOmega)}
   +C_{\gamma}
   \|v\|_{L^r(\varOmega)},
   \quad
   \gamma > 0, \ v\in W^{2,r}(\varOmega),
\end{equation}
we obtain, with $\gamma=\nu/M$,
\begin{equation}
\label{eq:derror-equation3aN2x22}
\begin{aligned}
  &
  \| \partial_k\widehat\vartheta\|_{\ell^p((0,T);L^r(\varOmega))}
  +
  \|\widehat\vartheta_t\|_{L^p((0,T);L^r(\varOmega))}
  +
  \|\widehat\vartheta\|_{L^p((0,T);W^{2,r}(\varOmega))}
  +
  \|\vartheta\|_{L^p((0,T);W^{2,r}(\varOmega))}
  \\ & \qquad 
  \leqslant
  C k^q
  +
  C_\nu \| \vartheta \|_{L^{p}((0,T);L^{r}(\varOmega))}
  +
  C_{p,q,T} L
  \big(
  M k^{q-1/p}+2\nu
  \big)
  \| \vartheta \|_{L^p((0,T);W^{2,r}(\varOmega))}  ,
\end{aligned}
\end{equation}
where $C$ and $C_\nu$ depend on $T$ but not on $k$ and $\varepsilon$.
If $C_{p,q,T} L \big( M k^{q-1/p}+2\nu \big)\leqslant 1/2$ (the same
smallness condition as before, but with $2\nu$ instead of
$\varepsilon$), then the last term can be hidden in the left-hand side
to get
\begin{equation}
\label{eq:derror-equation3aN3x2a}
\begin{aligned}
  &
  \| \partial_k\widehat\vartheta\|_{\ell^p((0,T);L^r(\varOmega))}
  +
  \|\widehat\vartheta_t\|_{L^p((0,T);L^r(\varOmega))}
  +
  \|\widehat\vartheta\|_{L^p((0,T);W^{2,r}(\varOmega))}
  + \|\vartheta\|_{L^p((0,T);W^{2,r}(\varOmega))}\\
  & \qquad
  \leqslant
  Ck^q+C_\nu \| \vartheta \|_{L^{p}((0,T);L^{r}(\varOmega))}. 
  \end{aligned} 
\end{equation}

We now apply a discrete Gronwall argument.  The estimate in
\eqref{eq:derror-equation3aN3x2a} is also valid with $T$ replaced by
$t_n,\ n=1,\dotsc,N$,
\begin{equation}
\label{eq:error-equation4n}
\begin{aligned}
  &
  \| \partial_k\widehat\vartheta\|_{\ell^p((0,t_n);L^r(\varOmega))}
  +\|\widehat\vartheta_t\|_{L^p((0,t_n);L^r(\varOmega))}
  + \|\widehat\vartheta\|_{L^p((0,t_n);W^{2,r}(\varOmega))}
  + \|\vartheta\|_{L^p((0,t_n);W^{2,r}(\varOmega))}
  \\ & \qquad
  \leqslant
  C_\nu \|\vartheta\|_{L^p((0,t_n);L^r(\varOmega))}+Ck^q,
  \quad n=1,\dotsc,N,
\end{aligned}
\end{equation}
with the same constants.  Here we need to bound
$\|\vartheta\|_{L^p((0,t_n);L^r(\varOmega))}$ by
$ \| \partial_k\widehat\vartheta\|_{\ell^p((0,t_n);L^r(\varOmega))}$
in such a way that the discrete Gronwall inequality applies.

The inequality
\begin{equation}
\label{eq:triangle-ineq}
\Big (\sum_{m=1}^{\bar n}\Big (\sum_{\ell=1}^m\alpha_\ell\Big )^p\Big )^{1/p}
\leqslant \sum_{m=1}^{\bar n} \Big (\sum_{\ell=1}^m(\alpha_\ell)^p\Big )^{1/p},
\end{equation}
for nonnegative real numbers $\alpha_1,\dotsc,\alpha_{\bar n},$
is an immediate consequence of the triangle inequality for the $\ell^p$-norm on $\R^{\bar n};$
indeed, the left-hand side is the $\ell^p$-norm on $\R^{\bar n}$ of the sum of the vectors 
\begin{equation}
  \label{eq:normofvectors9}
  \begin{pmatrix} 0 \\ \vdots \\ 0\\ \alpha_1\end{pmatrix}
  +\begin{pmatrix} 0 \\ \vdots \\ \alpha_1 \\ \alpha_2\end{pmatrix}
  +\dotsb+\begin{pmatrix} 0 \\ \alpha_1\\ \vdots \\  \alpha_{\bar n-1}\end{pmatrix}
  +\begin{pmatrix} \alpha_1 \\ \alpha_2\\  \vdots \\  \alpha_{\bar n}\end{pmatrix},
\end{equation}
while the right-hand side is the sum of the $\ell^p$-norms of these vectors;
see \cite{KuLL}.

We write $\widehat\vartheta$ in $J_{m-1}$ in the form
\begin{equation}
\label{eq:reccurence-relation}
\widehat\vartheta (t)=k\sum_{\ell=0}^{m-1}\partial_k\widehat\vartheta
(t-\ell k),\quad t\in (t_{m-1},t_m). 
\end{equation}
Notice that any norm of $\partial_k\widehat\vartheta (\cdot-\ell k)$
in the interval $(t_{m-1},t_m)$ coincides with the corresponding norm
of $\partial_k\widehat\vartheta $ in the interval
$(t_{m-\ell-1},t_{m-\ell}).$

Let us denote by $\alpha_\ell$ the discrete
$\ell^p(L^r(\varOmega))$-(semi)norm of the backward difference
quotient $\partial_k\widehat\vartheta$ of $\widehat\vartheta$ in the
interval $(t_{\ell-1},t_\ell),$
\[\alpha_\ell:= \|\partial_k\widehat\vartheta \|_{\ell^p ((t_{\ell-1},t_\ell);L^r(\varOmega) )}.\]
With this notation, \eqref{eq:reccurence-relation} yields
\[
  \|\vartheta \|_{\ell^p ((t_{m-1},t_m);L^r(\varOmega) )}
  =
  \|\widehat\vartheta \|_{\ell^p ((t_{m-1},t_m);L^r(\varOmega))}
  \leqslant
  k\sum_{\ell=1}^m\alpha_\ell;
\]
therefore,
\[
  \|\vartheta \|_{\ell^p ((0,t_n);L^r(\varOmega) )}
  =\|\widehat\vartheta \|_{\ell^p ((0,t_n);L^r(\varOmega) )}
  \leqslant
  k\Big (\sum_{m=1}^n\Big(\sum_{\ell=1}^m\alpha_\ell\Big)^p\Big)^{1/p},
\]
whence, in view of the inequality \eqref{eq:triangle-ineq},
\begin{equation}
\label{eq:estimate1-2023}
\|\vartheta \|_{\ell^p ((0,t_n);L^r(\varOmega) )}
=\|\widehat\vartheta \|_{\ell^p ((0,t_n);L^r(\varOmega) )}
\leqslant k\sum_{m=1}^n \Big (\sum_{\ell=1}^m(\alpha_\ell)^p\Big )^{1/p}.
\end{equation}
With the notation
\[\varTheta_m:=\Big (\sum_{\ell=1}^m(\alpha_\ell)^p\Big )^{1/p},\]
we have
\[\| \partial_k\widehat\vartheta\|_{\ell^p((0,t_n);L^r(\varOmega))}=\varTheta_n\quad\text{and}\quad
\|\vartheta\|_{L^p((0,t_n);L^r(\varOmega))}\leqslant c\|\vartheta \|_{\ell^p ((0,t_n);L^r(\varOmega) )}
\leqslant c k\sum_{m=1}^n \varTheta_m;\]
hence, \eqref{eq:error-equation4n} yields 
\begin{equation}
\label{eq:error-equation4nn}
\begin{aligned}
\varTheta_n+\|\widehat\vartheta_t\|_{L^p((0,t_n);L^r(\varOmega))}
&+ \|\widehat\vartheta\|_{L^p((0,t_n);W^{2,r}(\varOmega))}
+ \|\vartheta\|_{L^p((0,t_n);W^{2,r}(\varOmega))}\\
&\leqslant C_\nu k\sum_{m=1}^n \varTheta_m+Ck^q, 
\end{aligned}
\end{equation}
$n=1,\dotsc,N.$ We assume $C_\nu k\leqslant \frac12$, so that
  $C_\nu k\varTheta_n$ can be moved to the left-hand side. Then, a
discrete Gronwall inequality yields the optimal order estimate
\begin{equation}
\label{eq:error-equation5}
\begin{aligned}
  &
  \| \partial_k\widehat\vartheta\|_{\ell^p((0,T);L^r(\varOmega))}
  +\|\widehat\vartheta_t\|_{L^p((0,T);L^r(\varOmega))}
  \\ & \qquad 
  + \|\widehat\vartheta\|_{L^p((0,T);W^{2,r}(\varOmega))} 
  + \|\vartheta\|_{L^p((0,T);W^{2,r}(\varOmega))}
  \leqslant C_\nu k^q.
\end{aligned}
\end{equation}
The space-time Sobolev argument also gives
\begin{equation}
  \label{eq:spacetimesobolev10}
  \|\vartheta\|_{L^\infty((0,T);W^{1,\infty}(\varOmega))}
  \leqslant
  C_\nu k^q.
\end{equation}

\subsection{Conclusion of the proof} \label{subsec:conclusion}

We need to make sure that
\begin{equation}
  \label{eq:4maxnormbound2a}
  \|\vartheta\|_{L^\infty((0,T);W^{1,\infty}(\varOmega))}\leqslant \varepsilon ,
\end{equation}
so that the truncation does not take effect and $\vartheta$ is thus a
solution of the original error equation
\eqref{eq:error.dg-thetaequation4nx}.  The desired bound
\eqref{eq:4maxnormbound2a} will follow from
\eqref{eq:spacetimesobolev10} after we fix $\nu$ and then choose $k$
sufficiently small.

We have assumed that
\begin{align}
  \label{eq:11-smallness1}
    C L ( Mk^{q-1/p}+\varepsilon) &\leqslant 1/2
   &&\hspace*{-1.5cm} \text{in
    \eqref{eq:dG-Banach2},
    \eqref{eq:derror-equation3aN3x}, 
    \eqref{eq:derror-equation3aN3a},}
  \\ 
  \label{eq:11-smallness3}
       C Mk^q+C LM(Mk^{q-1/p}+\varepsilon) &\leqslant \nu
      &&\hspace*{-1.5cm} \text{in \eqref{Sobolev-ineq2b1y},}
  \\ 
  \label{eq:11-smallness4}
        C L ( M k^{q-1/p}+2\nu)&\leqslant 1/2 
      &&\hspace*{-1.5cm} \text{in 
       \eqref{eq:derror-equation3aN3x2a},}
  \\ 
  \label{eq:11-smallness5}
       C_\nu k&\leqslant 1/2 
       &&\hspace*{-1.5cm}\text{in \eqref{eq:error-equation5},}
\end{align}
where the constants do not depend on $k, \varepsilon$ and $\nu$
(except $C_\nu$ that does depend on $\nu$).  We must choose
$\varepsilon$ and $\nu$ so that these hold for small $k$.

We first fix $\nu = 1/(8CL)$, so that \eqref{eq:11-smallness4} as
well as \eqref{eq:11-smallness5} hold for small $k$.  We then choose
$\varepsilon=k^{q/2}$. Then, by \eqref{eq:spacetimesobolev10},
\begin{align}
  \label{eq:conclusion2}
  \|\vartheta\|_{L^\infty((0,T);W^{1,\infty}(\varOmega))}
  \leqslant 
  C_\nu k^q 
  \leqslant 
  C_\nu k^{q/2} k^{q/2} 
  = 
  C_\nu k^{q/2} \varepsilon
  \leqslant \varepsilon ,
\end{align}
if $C_\nu k^{q/2}\leqslant1$, so that \eqref{eq:4maxnormbound2a} holds
for small $k$.  Moreover,
\begin{align}
  \label{eq:conclusion4}
  CLk^q+CLM(Mk^{q-1/p}+\varepsilon)
  =
  CLk^q+CLM(Mk^{q-1/p}+k^{q/2})
  \leqslant 
  \nu , 
\end{align}
for small $k$, which is \eqref{eq:11-smallness3}.  Similarly,
\begin{align}
  \label{eq:conclusion3}
  C L M (Mk^{q-1/p}+\varepsilon) 
  =
  C L M (Mk^{q-1/p}+k^{q/2}) 
  \leqslant 1/2
\end{align}
for small $k$, which is \eqref{eq:11-smallness1}.

We conclude that there exist $k_0$ and $\nu$ such that, for $0<k<k_0$,
there is a solution $\vartheta$ of
\eqref{eq:error.dg-thetaequation4nx} with
$\| \vartheta \|_{L^\infty ((0,T);W^{1,\infty}(\varOmega) )}\leqslant \nu$
and hence a solution $U=\tilde{u}+\vartheta$ of \eqref{dg} in a
neighborhood of $u$ with radius $2\nu$.  For such $k$ the convergence
estimates \eqref{eq:error-equation5} and \eqref{eq:spacetimesobolev10}
hold.  In view of the approximation property
\eqref{eq:approx-prop-desired1}, this proves the asserted a priori
error estimate \eqref{apriori-estimate1}.  By making the additional
regularity assumption $u\in W^{q+1,p}\big ((0,T);L^r(\varOmega)\big )$
and by using \eqref{eq:approx-prop-desired2} and
\eqref{eq:approx-prop-desired3} we obtain
\eqref{apriori-estimate2}. 

Finally, the a priori error estimate \eqref{apriori-estimate3} is an
immediate consequence of the space-time Sobolev inequality
\eqref{Sobolev-ineq} and the a priori error estimate
\eqref{apriori-estimate2}.

\subsection{Local uniqueness} \label{subsec:uniqueness}

Let $U_1$ and $U_2$ be two solutions of the dG equation \eqref{dg} as
obtained in the previous step of the proof,
Subsection~\ref{subsec:conclusion}.  Thus, both belong to a
neighborhood of $u$ in $L^\infty ((0,T);W^{1,\infty}(\varOmega) )$
with (small) radius $2\nu$. We want to show that $U_1=U_2$, thereby
showing local uniqueness for equation \eqref{dg}.

We begin by noting that there are $C$ and $M_1$ such that
\begin{align}
  \label{eq:unique2b}
  \| u-U_i \|_{L^\infty ((0,T);W^{1,\infty}(\varOmega) )}
  & \leqslant 
    Ck^{q-1/p}, 
  \\ 
  \label{eq:unique2c}
  \| U_i \|_{L^\infty ((0,T);W^{2,r} (\varOmega) )}
  &\leqslant M_1.            
\end{align}
The first bound follows from \eqref{eq:6a} and
\eqref{eq:spacetimesobolev10}. For the second one we use
\eqref{eq:6a}, \eqref{eq:8}, \eqref{eq:error-equation5} and an
inverse inequality:
\begin{align*}
  \| U_i \|_{L^\infty ((0,T);W^{2,r} (\varOmega) )}
  &\leqslant
    \| u \|_{L^\infty ((0,T);W^{2,r} (\varOmega) )}
    +\| \rho_i \|_{L^\infty ((0,T);W^{2,r} (\varOmega) )}
  \\ & \quad 
       +C k^{-1/p}\| \vartheta_i \|_{L^p ((0,T);W^{2,r} (\varOmega) )}
       \leqslant
       M + C k^{q-1/p} \leqslant M_1. 
\end{align*}

By subtracting the two versions of \eqref{dg}, with $f$ depending
only on $\nabla U$, from each other, and linearizing about
$\nabla u$, we see that the difference
$\eta:=U_1-U_2=\vartheta_2-\vartheta_1\in \V_k^{\text{d}}(q-1)$
satisfies $\eta(0)=0$ and
\begin{equation}
  \label{eq:unique3}
  \int_{J_n}   \big[ ( \eta _t ,v )
  - (A(\nabla u)\eta , v ) \big] \, \d t  
  + ( \eta _n^{+}-\eta _n, v_n^{+}) 
  =\int_{J_n}   (G,v ) \, \d t  
  \quad \forall v \in \P(q-1)
\end{equation}
with remainder 
\begin{equation*}
  \label{eq:unique4} 
  G:= \big(A(\nabla u)-A(\nabla U_1)\big)U_1 
  -
  \big(A(\nabla u)-A(\nabla U_2)\big)U_2 ; 
\end{equation*} 
cf.~the derivation of \eqref{eq:error.dg-thetaequation4nx}.
By adding and subtracting $\big(A(\nabla u)-A(\nabla U_1)\big)U_2$,
we obtain
\begin{equation*}
  \label{eq:unique5} 
  G= -\big(A(\nabla u)-A(\nabla U_1)\big)\eta
  -
  \big(A(\nabla U_1)-A(\nabla U_2)\big)U_2 .  
\end{equation*} 
Similarly to \eqref{eq:10Nx2M}, we obtain  (here we use
\eqref{eq:unique2b} and \eqref{eq:unique2c})
\begin{equation*} \label{eq:unique6}
  \begin{split}
    \|G\|_{L^p ((0,T);L^r (\varOmega) )}
    &\leqslant
    L \| u-U_1 \|_{L^\infty ((0,T);W^{1,\infty}(\varOmega))}
    \|\eta\|_{L^p ((0,T);W^{2,r}(\varOmega))}
    \\ & \quad 
    +
    L \| \eta \|_{L^p ((0,T);W^{1,\infty}(\varOmega))}
    \| U_2 \|_{L^\infty ((0,T);W^{2,r}(\varOmega))}
    \\ & 
    \leqslant
    L Ck^{q-1/p} \| \eta \|_{L^p ((0,T);W^{2,r}(\varOmega))}
    +
    L M_1 \| \eta \|_{L^p ((0,T);W^{1,\infty}(\varOmega))} .
  \end{split}
\end{equation*}
In view of \eqref{eq:gagliardo-nirenberg}, we have
\begin{equation*} \label{eq:unique7}
  \|G\|_{L^p ((0,T);L^r (\varOmega) )}
  \leqslant
  L (Ck^{q-1/p}+M_1\gamma) \| \eta \|_{L^p ((0,T);W^{2,r}(\varOmega))}
  +
  L M_1 C_\gamma\| \eta \|_{L^\infty ((0,T);L^r(\varOmega))}.  
\end{equation*}
The maximal regularity Lemma~\ref{Le:maxreg-dG} applied to
equation \eqref{eq:unique3} now gives
\begin{equation*}
  \begin{aligned}
    &
    \| \partial_k\widehat\eta\|_{\ell^p((0,T);L^r(\varOmega))}
    +
    \|\widehat\eta_t\|_{L^p((0,T);L^r(\varOmega))}
    +
    \|\widehat\eta\|_{L^p((0,T);W^{2,r}(\varOmega))}
    + \|\eta\|_{L^p((0,T);W^{2,r}(\varOmega))}
    \\ &\qquad 
    \leqslant
    C_{p,r,q}
    \|G\|_{L^p ((0,T);L^r (\varOmega) )}
    \\ &\qquad 
    \leqslant
    C_{p,r,q} L (Ck^{q-1/p}+M_1\gamma) \| \eta \|_{L^p ((0,T);W^{2,r}(\varOmega))}
    +
    C_{p,r,q} L M_1 C_\gamma\| \eta \|_{L^\infty ((0,T);L^r(\varOmega))}.
  \end{aligned}
\end{equation*}
If $k$ and $\gamma$ are small, then the first term can be hidden in
the left-hand side and the proof is completed by the discrete Gronwall
argument in Subsection~\ref{subsec:convergence}. 

\section{Conditional a posteriori error estimates}\label{Se:aposteriori}
The purpose of this section is the proof of
Theorem~\ref{Theorem2}.  As in Section~\ref{Se:3}, we assume without
loss of generality that $f$ depends only on $\nabla u$, but not
explicitly on $u$. Let 
\begin{equation}
  \label{eq:aposter1}
  R(t):=\wU_t(t)-\nabla\cdot f(\nabla \wU(t)),\quad t\in (0,T], 
\end{equation}
be the residual of the reconstruction $\wU,$ i.e., the amount by which
$\wU$ misses being an exact solution of the differential equation in
\eqref{ivp}. Consider the error $\hat e:=u-\wU.$ Subtracting
\eqref{eq:aposter1} from the differential equation in \eqref{ivp}, we
obtain
\begin{equation}
  \label{eq:aposter2}
  \hat e_t(t)=\nabla\cdot f(\nabla u(t))-\nabla\cdot f(\nabla
  \wU(t))-R(t),
  \quad t\in (0,T]. 
\end{equation}
Using here the operator notation $A(\nabla v),$ see
\eqref{eq:diff-eq2n} and \eqref{eq:notation-A},
we have
\[\hat e_t(t)=A(\nabla u(t))u(t)-A(\nabla \wU(t))\wU(t)-R(t),\]
whence
\begin{equation}
  \label{eq:aposter3}
  \hat e_t(t)=A(\nabla u(t))\hat e(t)+g(t)-R(t),
\end{equation}
with
\[
  g:=
  \big [A(\nabla u)-A(\nabla \wU) \big ]\wU
  =
  \big [A(\nabla u)-A(\nabla (u-\hat e)\big ]
  \big (u-\hat e\big );
\] 
cf.~\eqref{eq:G1}. Considering $g$ a perturbation term, the maximal
regularity property for non-autonomous linear parabolic equations,
Lemma~\ref{Le:maxreg-nonaut}, applied to \eqref{eq:aposter3} leads to
the preliminary estimate
\begin{equation}
  \label{eq:aposter4}
  \|\hat e_t\|_{L^p ((0,t);L^r (\varOmega) )}
  +\|\hat e\|_{L^p ((0,t);W^{2,r} (\varOmega) )}
  \leqslant C\|g\|_{L^p ((0,t);L^r (\varOmega) )}
  +C\|R\|_{L^p ((0,t);L^r (\varOmega) )}
\end{equation}
for $t\in (0,T].$ To estimate $g$ on the right-hand side of
\eqref{eq:aposter4}, we assume that
\begin{equation}
  \label{eq:aposter5}
  \|\hat e\|_{L^\infty ((0,T);W^{1,\infty}(\varOmega)} 
  \leqslant \nu
\end{equation}
with a sufficiently small constant $\nu,$ independent of $k$ and $N$
such that $Nk\leqslant T;$ notice that it is exactly the assumption
\eqref{eq:aposter5} that makes our a posteriori error analysis
\emph{conditional}.  The a priori error estimate
\eqref{apriori-estimate3} shows that it is, in principle, possible
to achieve this condition.

We argue similarly to Subsection~\ref{subsec:convergence}.  By our
regularity assumption on $u$, we may use the bounds stated in
Subsection~\ref{SSe:regularity-assumptions}. In particular, we have
\begin{equation*}
  \| u\|_{L^\infty ((0,T);W^{1,\infty}(\varOmega)}\leqslant M, \quad
  \|\widehat U\|_{L^\infty ((0,T);W^{1,\infty}(\varOmega)}\leqslant M+\nu\leqslant M+1.    
\end{equation*}
Then, the local Lipschitz bound in \eqref{Lipschitz-A} with constant
$L=L_{M+1}$ gives, similarly to \eqref{eq:10Nx2M},
\begin{equation} \label{eq:estimateofg}
  \begin{split}
    \|g\|_{L^p ((0,t);L^r (\varOmega) )}
    &\leqslant
    L \|\nabla\hat e\|_{L^p ((0,t);L^{\infty}(\varOmega))}
    \|u\|_{L^\infty ((0,t);W^{2,r}(\varOmega))}\\ 
    & \quad +
    L \|\nabla\hat e\|_{L^\infty ((0,t);L^{\infty}(\varOmega))}
    \|\hat e\|_{L^p ((0,t);W^{2,r}(\varOmega))} \\ & 
    \leqslant
    L M \|\hat e\|_{L^p ((0,t);W^{1,\infty}(\varOmega))}
    +
    L \nu \|\hat e\|_{L^p ((0,t);W^{2,r}(\varOmega))}.
  \end{split}
\end{equation}
As in \eqref{eq:derror-equation3aN2x22} this leads to
\begin{align*}
  & \|\hat e_t\|_{L^p ((0,t);L^r (\varOmega) )}
    +\|\hat e\|_{L^p ((0,t);W^{2,r} (\varOmega) )}
  \\ & \qquad
       \leqslant 
       C L M \|\hat e\|_{L^p ((0,t);L^r(\varOmega))}
       +
       C L \nu \|\hat e\|_{L^p ((0,t);W^{2,r}(\varOmega))}
       + C\|R\|_{L^p ((0,t);L^r (\varOmega) )}
\end{align*}
and, if $\nu$ is small enough, 
\begin{equation}
  \label{eq:aposter9}
  \|\hat e_t\|_{L^p ((0,t);L^r (\varOmega) )}
  +\|\hat e\|_{L^p ((0,t);W^{2,r} (\varOmega) )}
  \leqslant 
  C \|\hat e\|_{L^p ((0,t);L^r(\varOmega))}
  + C\|R\|_{L^p ((0,t);L^r (\varOmega) )}
\end{equation}
for $t\in(0,T]$ and appropriate constants denoted by $C$.  The desired a
posteriori bound \eqref{aposteriori-estimate1} will now follow by a
(continuous) Gronwall argument.

For this purpose, we must bound
$\|\hat e\|_{L^p ((0,t);L^r (\varOmega) )}$ by
$\|\hat e_t\|_{L^p ((0,t);L^r (\varOmega) )}$ in a way that 
will allow us to apply the Gronwall inequality. Now,
\[
  \hat e(s)
  =\int_0^s\hat e_t(\tau)\, \d \tau,
  \text{ whence } \|\hat e(s)\|_{L^r (\varOmega)}
  \leqslant \int_0^s\|\hat e_t(\tau)\|_{L^r (\varOmega)}\, \d \tau,
\]
and, thus, by H\"older's inequality, with $p'$ the dual of $p,$
\[\|\hat e(s)\|_{L^r (\varOmega)}^p
  \leqslant s^{p/p'}\int_0^s\|\hat e_t(\tau)\|_{L^r (\varOmega)}^p\, \d \tau.\]
Therefore,
\begin{equation}
  \label{eq:aposter10}
  \|\hat e\|_{L^p ((0,t);L^r (\varOmega) )}^p
  \leqslant \int_0^t s^{p/p'}\|\hat e_t\|_{L^p ((0,s);L^r (\varOmega) )}^p\, \d s.
\end{equation}
In view of the equivalence of the $\ell^1$ and $\ell^p$ norms on $\R^2,$
\eqref{eq:aposter9} yields
\begin{equation}
  \label{eq:aposter8}
  \|e_t\|_{L^p ((0,t);L^r (\varOmega) )}^p
  +\|\hat e\|_{L^p ((0,t);W^{2,r} (\varOmega) )}^p
  \leqslant C\|\hat e\|_{L^p ((0,t);L^r (\varOmega) )}^p
  +C\|R\|_{L^p ((0,t);L^r (\varOmega) )}^p.
\end{equation}
Combining \eqref{eq:aposter8} and \eqref{eq:aposter10}, we get
\[\|\hat e_t\|_{L^p ((0,t);L^r (\varOmega) )}^p
  +\|\hat e\|_{L^p ((0,t);W^{2,r} (\varOmega) )}^p
  \leqslant
  C\int_0^t s^{p/p'}\|\hat e_t\|_{L^p ((0,s);L^r(\varOmega) )}^p\, \d s
  +C\|R\|_{L^p ((0,t);L^r (\varOmega) )}^p.\]
We then infer, using Gronwall's inequality, that
\[\|\hat e_t\|_{L^p ((0,t);L^r (\varOmega) )}^p
  +\|\hat e\|_{L^p ((0,t);W^{2,r} (\varOmega) )}^p
  \leqslant C\|R\|_{L^p ((0,t);L^r (\varOmega) )}^p,\]
and obtain the desired a posteriori error estimate
\begin{equation}
  \label{eq:aposter11}
  \|\hat e_t\|_{L^p ((0,t);L^r (\varOmega) )}+\|\hat e\|_{L^p ((0,t);W^{2,r} (\varOmega) )}
  \leqslant C\|R\|_{L^p ((0,t);L^r (\varOmega) )}
\end{equation}
for $t\in(0,T]$.

We show finally that the a posteriori estimator
$\|R\|_{L^p ((0,T);L^r (\varOmega) )}$ is of optimal order $k^q.$
We have
\begin{equation}
  \label{eq:optimal1}
  R(t)=-\hat e_t(t)+A(\nabla u(t))\hat e(t)+g(t);
\end{equation}
see \eqref{eq:aposter3}. Now, the first two terms on the right-hand
side of \eqref{eq:optimal1} are of order $k^q$ in the desired norm in
view of \eqref{apriori-estimate2}. Finally, $g$ is also of order $k^q$
in the desired norm as a consequence of \eqref{eq:estimateofg} and
\eqref{apriori-estimate2}.

\bibliographystyle{amsplain}

\end{document}